\newcommand\cA{\EuScript{A}}
\newcommand\cG{\EuScript{G}}
\newcommand\cM{\EuScript{M}}
\newcommand\cN{\EuScript{N}}
\newcommand\bC{\mathbb{C}}
\newcommand\bR{\mathbb{R}}
\newcommand\bZ{\mathbb{Z}}
\newcommand\Lie{\mathrm{Lie}}
\newcommand\ad{\operatorname{ad}}
\newcommand\Ad{\operatorname{Ad}}
\newcommand\Hol{\mathrm{Hol}}
\newcommand\Hom{\operatorname{Hom}}
\newcommand\Aut{\operatorname{Aut}}
\newcommand\Inn{\operatorname{Inn}}
\newcommand\irr{\mathrm{irr}}
\newcommand\red{\mathrm{red}}
\newcommand\good{\mathrm{good}}
\newcommand\fk{\mathfrak{k}}
\newcommand\tM{\tilde{M}}
\newcommand\tP{\tilde{P}}
\newcommand\tphi{{\tilde\phi}}
\newcommand\tGam{{\tilde\Gam}}
\newcommand\tSig{{\tilde\Sig}}
\newcommand\al\alpha
\newcommand\gam\gamma
\newcommand\vph\varphi
\newcommand\sig\sigma
\newcommand\om\omega
\newcommand\upom\upomega
\newcommand\Gam\varGamma
\newcommand\Lam\varLambda
\newcommand\Ps\varPsi
\newcommand\Sig\varSigma
\newcommand\Om\varOmega
\newcommand\doubleslash{/ \negthinspace\negthinspace /}
\newcommand\doubleslashz{\doubleslash_{\!0\;\!}}
\newcommand\tripleslash{\doubleslash\negthinspace\negthinspace /_{\!0\;\!}}
\DeclareMathOperator\im{im}
\DeclareMathOperator\id{id}
\numberwithin{equation}{section}
\newtheorem{thm}{Theorem}[section]
\newtheorem{lem}[thm]{Lemma}
\newtheorem{cor}[thm]{Corollary}
\newtheorem{pro}[thm]{Proposition}
\theoremstyle{remark}
\newtheorem{rem}[thm]{Remark}
\newcommand\ii{\sqrt{-1}}
\newcommand\bra{\langle}
\newcommand\ket{\rangle}
\newcommand\dR{\mathrm{dR}}
\newcommand\sst{\mathrm{sst}}
\newcommand\pst{\mathrm{pst}}
\newcommand\bet{\mathrm{Betti}}
\newcommand\fla{\mathrm{flat}}
\newcommand\hit{\mathrm{Hitchin}}
\newcommand\hig{\mathrm{Higgs}}
\newcommand\Dol{\mathrm{Dol}}
\newcommand\HK{\mathrm{HK}}
\begin{document}

\parskip=0.3\baselineskip
\baselineskip=1.2\baselineskip

\vspace*{-1.25cm}


\vspace*{1.5cm}

\title{Hitchin's equations on a nonorientable manifold}

\author{Nan-Kuo Ho}
\address{Department of Mathematics, National Tsing Hua University, Hsinchu 300, Taiwan and
National Center for Theoretical Sciences, Taipei 106,  Taiwan}
\email{nankuo@math.nthu.edu.tw}

\author{Graeme Wilkin}
\address{Department of Mathematics,
National University of Singapore, Singapore 119076}
\email{graeme@nus.edu.sg}

\author{Siye Wu}
\address{Department of Mathematics,
University of Hong Kong, Hong Kong \\ 
Current address: Department of Mathematics, National Tsing Hua University, Hsinchu 300, Taiwan}
\email{swu@math.nthu.edu.tw}

\keywords{moduli spaces, non-orientable manifolds,
symplectic and hyper-K\"ahler geometry, representation varieties}

\begin{abstract}
We define Hitchin's moduli space $\cM^\hit(P)$ for a principal bundle $P$,
whose structure group is a compact semisimple Lie group $K$, over a compact
non-orientable Riemannian manifold $M$.
We use the Donaldson-Corlette correspondence, which identifies Hitchin's
moduli space with the moduli space of flat $K^\bC$-connections, which
remains valid when $M$ is non-orientable.
This enables us to study Hitchin's moduli space both by gauge theoretical
methods and algebraically by using representation varieties.
If the orientable double cover $\tM$ of $M$ is a K\"ahler manifold with odd
complex dimension and if the K\"ahler form is odd under the non-trivial deck
transformation $\tau$ on $\tM$, Hitchin's moduli space $\cM^\hit(\tP)$ of
the pull-back bundle $\tP\to\tM$ has a hyper-K\"ahler structure and admits
an involution induced by $\tau$.
The fixed-point set $\cM^\hit(\tP)^\tau$ is symplectic or Lagrangian with
respect to various symplectic structures on $\cM^\hit(\tP)$.
We show that there is a local diffeomorphism from $\cM^\hit(P)$ to
$\cM^\hit(\tP)^\tau$.
We compare the gauge theoretical constructions with the algebraic approach
using representation varieties.\\

\noindent
2010 Mathematics Subject Classification: 53D30, 58D27
\end{abstract}

\maketitle

\section{Introduction}

Let $M$ be a compact orientable Riemannian manifold and let $K$ be a connected
compact Lie group.
Given a principal $K$-bundle $P\to M$, let $\cA(P)$ be the space of connections
and let $\cG(P)$ be the group of gauge transformations on $P$.
Consider Hitchin's equations
\begin{equation}\label{H-eq}
F_A-\tfrac12[\psi,\psi]=0,\quad d_A\psi=0,\quad d^*_A\psi=0
\end{equation}
on the pairs $(A,\psi)\in\cA(P)\times\Om^1(M,\ad P)$.
Hitchin's moduli space $\cM^\hit(P)$ is the set of space of solutions
$(A,\psi)$ to \eqref{H-eq} modulo $\cG(P)$ \cite{Hi,S88}.
On the other hand, let $G=K^\bC$ be the complexification of $K$ and let
$P^\bC=P\times_KG$, which is a principal bundle with structure group $G$.
The moduli space $\cM^\dR(P^\bC)$ of flat $G$-connections on $P^\bC$, also
known as the de Rham moduli space, is the space of flat reductive connections
of $P^\bC$ modulo $\cG(P)^\bC\cong\cG(P^\bC)$.
A theorem of Donaldson \cite{Do} and Corlette \cite{C} states that the
moduli spaces $\cM^\hit(P)$ and $\cM^\dR(P^\bC)$ are homeomorphic.
The smooth part of $\cM^\hit(P)$ is a K\"ahler manifold with a complex
structure $\bar J$ induced by that on $G$.

Suppose in addition that $M$ is a K\"ahler manifold.
Then there is another complex structure $\bar I$ on $\cM^\hit(P)$ induced by
that on $M$, and a third one given by $\bar K=\bar I\bar J$.
The three complex structures $\bar I,\bar J,\bar K$ and their corresponding
K\"ahler forms $\bar\upom_I,\bar\upom_J,\bar\upom_K$ form a hyper-K\"ahler
structure on (the smooth part of) $\cM^\hit(P)$ \cite{Hi,S88}.
This hyper-K\"ahler structure comes from an infinite dimensional version of a
hyper-K\"ahler quotient \cite{HKLR} of the tangent bundle $T\cA(P)$, which is
hyper-K\"ahler, by the action of $\cG(P)$, which is Hamiltonian with respect
to each of the K\"ahler forms $\upom_I,\upom_J,\upom_K$ on $T\cA(P)$.
When $M$ is a compact orientable surface, Hitchin's moduli space $\cM^\hit(P)$
is equal to the hyper-K\"ahler quotient $\cM^\HK(P):=T\cA(P)\tripleslash\cG(P)$
\cite{Hi}.
It plays an important role in mirror symmetry and geometric Langlands program
\cite{HT,KW}.
When $M$ is higher dimensional, $\cM^\hit(P)$ is a hyper-K\"ahler subspace
in $\cM^\HK(P)$ \cite{S88}.

For a compact Lie group $K$, the moduli space of flat $K$-connections on
a compact orientable surface was already studied in a celebrated work of
Atiyah and Bott \cite{AB}.
When $M$ is a compact, nonorientable surface, the moduli space of flat
$K$-connections was studied in \cite{Ho,HL2} through an involution on the
space of connections over its orientable double cover $\tM$, induced by
lifting the deck transformation on $\tM$ to the pull-back $\tP\to\tM$ of
the given $K$-bundle $P\to M$ so that the quotient of $\tP$ by the
involution is the original bundle $P$ itself.
This involution acts trivially on the structure group $K$.
If instead one considers an involution on the bundle over $\tM$ that acts
nontrivially on the fibers (such as the complex conjugation), then the fixed
points give rise to the moduli space of real or quaternionic vector bundles
over a real algebraic curve.
This was studied thoroughly in \cite{BHH,Sch}, for example when $K=U(n)$.

In this paper, we study Hitchin's equations on a non-orientable manifold.
Let $M$ be a compact connected non-orientable Riemannian manifold and
let $P\to M$ be a principal $K$-bundle over $M$, where $K$ is a compact
connected Lie group.
The de Rham moduli space $\cM^\dR(P^\bC)$, i.e., the moduli space of flat
connections on $P^\bC$, does not depend on the orientability of $M$.
On the other hand, Hitchin's equations \eqref{H-eq} on the pairs
$(A,\psi)\in\cA(P)\times\Om^1(M,\ad P)$ still make sense (see
subsection~\ref{sec:DC}).
We define Hitchin's moduli space $\cM^\hit(P)$ as the quotient of the space
of pairs $(A,\psi)$ satisfying \eqref{H-eq} by the group $\cG(P)$ of gauge
transformations on $P$.
We explain that the homeomorphism $\cM^\hit(P)\cong\cM^\dR(P^\bC)$
of Donaldon-Corlette remains valid when $M$ is non-orientable
(Theorem~\ref{thm:DC}).

If the oriented cover $\tM$ of $M$ is a K\"ahler manifold, then for the
pull-back bundle $\tP:=\pi^*P$ over $\tM$, Hitchin's moduli space
$\cM^\hit(\tP)$ is hyper-K\"ahler with complex structures
$\bar I,\bar J,\bar K$ and K\"ahler forms
$\bar\upom_I,\bar\upom_J,\bar\upom_K$.
If the K\"ahler form $\om$ on $\tM$ satisfies $\tau^*\om=-\om$ (the complex
dimension of $\tM$ must be odd for $\tau$ to be orientation reversing), then
$\tau$ induces an involution (still denoted by $\tau$) on $\cM^\hit(\tP)$ that
satisfies $\tau^*\bar\upom_I=-\bar\upom_I$, $\tau^*\bar\upom_J=\bar\upom_J$
and $\tau^*\bar\upom_K=-\bar\upom_K$.
Consequently, the fixed-point set $(\cM^\hit(\tP))^\tau$ is Lagrangian in
$\cM^\hit(\tP)$ with respect to $\bar\upom_I,\bar\upom_K$ and symplectic with
respect to $\bar\upom_J$.
This is known as an (A,B,A)-brane in \cite{KW}.
We discover that Hitchin's moduli space $\cM^\hit(P)$ (where $M$ is
non-orientable) is related to $(\cM^\hit(\tP))^\tau$ by a local diffeomorphism.
Our main results are summarized in the following main theorem.
For simplicity, we restrict to certain smooth parts $\cM^\hit(P)^\circ$,
$\cM^\hit(\tP)^\circ$ and $\cA^\fla(P^\bC)^\circ$ of the respective spaces
(see subsection~\ref{sec:Hitchin} for details).

\begin{thm}\label{thm:Hitchin}
Let $M$ be a compact non-orientable manifold and let $\pi\colon\tM\to M$ be
its oriented cover on which there is a non-trivial deck transformation $\tau$.
Let $K$ be a compact connected Lie group.
Given a principal $K$-bundle $P\to M$, let $\tP=\pi^*P$ be its pull-back to
$\tM$.
Suppose that $\tM$ is a K\"ahler manifold of odd complex dimension and
the K\"ahler form $\om$ on $\tM$ satisfies $\tau^*\om=-\om$.
Then\\
(1) $\cM^\hit(P)^\circ=\cA^\fla(P^\bC)^\circ\doubleslashz\cG(P)$, which is
a symplectic quotient.\\
(2) $(\cM^\hit(\tP)^\circ)^\tau$ is K\"ahler and totally geodesic in
$\cM^\hit(\tP)^\circ$ with respect to $\bar J,\bar\upom_J$ and totally real
and Lagrangian with respect to $\bar I,\bar K$ and $\bar\upom_I,\bar\upom_K$.\\
(3) there is a local K\"ahler diffeomorphism from $\cM^\hit(P)^\circ$ to
$(\cM^\hit(\tP)^\circ)^\tau$.
\end{thm}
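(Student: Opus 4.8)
For Part~(1), the plan is to exhibit $\cM^\hit(P)$ as a symplectic reduction by the \emph{compact} gauge group, using Theorem~\ref{thm:DC}. Reducing a flat $G$-connection on $P^\bC$ to $K$ as $d_A+\psi$ with $A\in\cA(P)$ and $\psi\in\Om^1(M,\ad P)$, the flatness $F_{d_A+\psi}=0$ is equivalent to $F_A-\tfrac12[\psi,\psi]=0$ together with $d_A\psi=0$, so that $\cA^\fla(P^\bC)$ is the vanishing locus of the complex moment map for the group-induced complex structure $\bar J$. I would then verify that the real moment map for the $\cG(P)$-action with respect to $\bar\upom_J$ is $\mu_J(A,\psi)=d^*_A\psi$, so that inside $\cA^\fla(P^\bC)$ the condition $\mu_J=0$ is exactly Corlette's equation. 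Both $\bar\upom_J$ and $\mu_J$ are built solely from the complex structure of $G=K^\bC$ and the Riemannian \emph{density} on $M$, so neither requires $M$ to be orientable. By Theorem~\ref{thm:DC} the quotient of this solution set by $\cG(P)$ is $\cM^\hit(P)\cong\cM^\dR(P^\bC)$, which is precisely the symplectic reduction $\cA^\fla(P^\bC)^\circ\doubleslashz\cG(P)$.

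For Part~(2) the symplectic and Lagrangian statements are already recorded above, so the new content is geometric. Since the $L^2$-metric on $\cM^\hit(\tP)$ uses the Riemannian density, which $\tau$ preserves, $\tau$ is an isometry of the hyper-K\"ahler metric; hence its fixed-point set is totally geodesic. From $\tau^*\bar\upom_J=\bar\upom_J$ and isometry, $\tau$ is holomorphic for $\bar J$, so the fixed set is a K\"ahler submanifold for $(\bar J,\bar\upom_J)$. From $\tau^*\bar\upom_I=-\bar\upom_I$ and $\tau^*\bar\upom_K=-\bar\upom_K$, $\tau$ is anti-holomorphic for $\bar I$ and $\bar K$; consequently each of these complex structures carries the $(+1)$-eigenspace of $d\tau$ into the $(-1)$-eigenspace, that is, maps the tangent bundle of the fixed set into its normal bundle, which is the assertion of total reality.

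For Part~(3) the natural map sends a Corlette solution $(A,\psi)$ on $M$ to its pull-back $(\pi^*A,\pi^*\psi)$ on $\tM$; this is manifestly $\tau$-invariant and solves Hitchin's equations on $\tM$, so it represents a point of $(\cM^\hit(\tP)^\circ)^\tau$, with well-definedness following from $\cG(P)\cong\cG(\tP)^\tau$. Because pull-back identifies forms on $M$ with $\tau$-invariant forms on $\tM$, it identifies the linearized Hitchin complex on $M$ with the $\tau$-invariant subcomplex on $\tM$; taking cohomology, the tangent space to $\cM^\hit(P)^\circ$ is identified with the $\tau$-invariant deformations on $\tM$, which by Part~(2) is the tangent space to $(\cM^\hit(\tP)^\circ)^\tau$. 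Thus on the smooth loci the map is an isomorphism on tangent spaces, and the inverse function theorem yields a local diffeomorphism. It is K\"ahler because pull-back intertwines the $G$-induced complex structure $\bar J$ on $\cM^\hit(P)^\circ$ with $\bar J$ on $\cM^\hit(\tP)^\circ$ and, by Part~(1), sends the reduced symplectic form to the restriction of $\bar\upom_J$ to the fixed set.

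The main difficulty is Part~(3): upgrading this tangent-level isomorphism to a genuine \emph{local} diffeomorphism and explaining why it is only local. Globally the map need not be bijective, because a class is $\tau$-fixed whenever $g\cdot\tau(\tilde A,\tilde\psi)=(\tilde A,\tilde\psi)$ for \emph{some} $g\in\cG(\tP)$ rather than only for $g=\id$; such twisted-equivariant solutions form further sheets of the fixed-point set, corresponding to other $K$-bundles over $M$ or differing by the centre of $K$. One must therefore show that pull-back is a diffeomorphism onto the $g=\id$ sheet and analyse the possible covering behaviour, all while controlling the $\tau$-parity of the relevant elliptic operators on $\tM$ and the smoothness and transversality needed for the implicit function theorem.
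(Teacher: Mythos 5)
Your proposal is correct and follows essentially the same route as the paper: part (1) via the moment map $\upmu_J(A,\psi)=d_A^*\psi$ for the compact gauge group on the flat locus (which needs only the Riemannian density, not an orientation), part (2) via the fixed-point set of an isometric involution that is holomorphic for $\bar J$ and anti-holomorphic/anti-symplectic for $\bar I,\bar K$, and part (3) via pull-back with the tangent-space isomorphism obtained by averaging over $\tau$, together with the correct diagnosis that the map is only local because a class can be $\tau$-fixed through a nontrivial gauge transformation $g$. The only organizational difference is that the paper packages the reduction and fixed-point arguments into a finite-dimensional lemma and proposition (Lemma~\ref{lem:fd}, Proposition~\ref{pro:fd}) and then applies them to $\cA^\fla(\tP^\bC)^\circ$ and $T\cA(\tP)$, whereas you argue directly in the gauge-theoretic setting.
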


The theorem of Donaldson and Corlette in the non-orientable setup
(Theorem~\ref{thm:DC}) enable us to identify Hitchin's moduli space
associated to an orientable or non-orientable manifold with the moduli space
of flat connections and therefore the representation varieties.
Let $\Gam$ be a finitely generated group and let $G$ be a connected complex
semi-simple Lie group.
The representation variety, $\Hom(\Gam,G)\doubleslash G:=\Hom^\red(\Gam,G)/G$,
is the quotient of the space of reductive homomorphisms from $\Gam$ to $G$ by
the conjugation action of $G$.
When $\Gam$ is the fundamental group of a compact manifold $M$, the
representation variety is also called the Betti moduli space of $M$;
it is homeomorphic to the union of the de Rham moduli spaces $\cM^\dR(P)$
associated to principal $G$-bundles $P\to M$ of various topology.
When $M$ is non-orientable, let $\tGam$ be the fundamental group of the
oriented cover $\tM$.
Then there is a short exact sequence $1\to\tGam\to\Gam\to\bZ_2\to1$ and $\tau$
acts as an involution on the representation variety
$\Hom(\tGam,G)\doubleslash G$ (Lemma~\ref{lem:z2}).
We study the relation of representation varieties associated to $\Gam$ and
$\tGam$ from an algebraic point of view.
Let $PG=G/Z(G)$, where $Z(G)$ is the center of $G$.
Our main results are summarized in the following theorem.

\begin{thm}\label{thm:cover}
Let $G$ be a connected complex semi-simple Lie group.
Let $M$ be a compact non-orientable manifold and let $\tM$ be its oriented
cover on which there is a non-trivial deck transformation $\tau$.
Denote $\Gam=\pi_1(M)$ and $\tGam=\pi_1(\tM)$ with some chosen base points.
Then\\
(1) there exists a continuous map $L$ from $(\Hom^\good(\tGam,G)/G)^\tau$ to
$Z(G)/2Z(G)$.
Consequently,
$(\Hom^\good(\tGam,G)/G)^\tau=\bigcup_{r\in Z(G)/2Z(G)}\cN^\good_r$,
where $\cN^\good_r$ is the preimage of $r\in Z(G)/2Z(G)$.\\
(2) there exists a $|Z(G)/2Z(G)|$-sheeted Galois covering map from \\
$\Hom^\good_\tau(\Gam,G)/G$ to $\cN^\good_0$.\\
In particular, if $|Z(G)|$ is odd, then there exists a bijection from
$\Hom^\good_\tau(\Gam,G)/G$ to $(\Hom^\good(\tGam,G)/G)^\tau$.
The above statements are true if $\Hom^\good$ is replaced by $\Hom^\irr$.\\
If in addition $M=\Sig$ is a compact non-orientable surface and $G$ is
simple and simply connected, then\\
(3) there exists a surjective map from $(\Hom^\irr(\tGam,G)/G)^\tau$ to
$\Hom^\irr_\tau(\Gam,PG)/PG$ that maps $\cN^\irr_r$ to flat $PG$-bundles
on $\Sig$ whose topological type is given by $r\in Z(G)/2Z(G)\cong H^2(\Sig,Z(G))$.
In particular, $\cN^\irr_0$ maps to the topologically trivial flat
$PG$-bundles on $\Sig$.
\end{thm}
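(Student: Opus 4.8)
The plan is to route the entire statement through one elementary mechanism. Choose $\delta\in\Gam$ mapping to the nontrivial element of $\bZ_2$; then $\delta^2\in\tGam$ and conjugation $c_\delta\colon\gam\mapsto\delta\gam\delta^{-1}$ is an automorphism of $\tGam$ representing the deck action $\tau$, where a different choice of $\delta$ changes $c_\delta$ by an inner automorphism of $\tGam$ and so acts trivially on $\Hom(\tGam,G)/G$. A class $[\tilde\rho]$ is then $\tau$-fixed precisely when there is some $g\in G$ with $\tilde\rho(\delta\gam\delta^{-1})=g\,\tilde\rho(\gam)\,g^{-1}$ for all $\gam\in\tGam$, and the goodness (resp. irreducibility) hypothesis forces the centralizer of $\tilde\rho(\tGam)$ to be $Z(G)$, so $g$ is unique up to $Z(G)$. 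This intertwiner $g$ is the single object from which everything is built.

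For part (1) I would apply $c_\delta$ twice: comparing $\tilde\rho(\delta^2\gam\delta^{-2})=g^2\tilde\rho(\gam)g^{-2}$ with the value of $\tilde\rho$ on the element $\delta^2\in\tGam$ shows that $c:=\tilde\rho(\delta^2)^{-1}g^2$ centralizes $\tilde\rho(\tGam)$, hence lies in $Z(G)$. Replacing $g$ by $gz$ with $z\in Z(G)$ multiplies $c$ by $z^2$, so the class $\beta([\tilde\rho]):=[c]\in Z(G)/2Z(G)$ is well defined; continuity is immediate because $g$ and $\tilde\rho(\delta^2)$ vary continuously while $Z(G)/2Z(G)$ is discrete. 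Setting $\cN^\good_r=\beta^{-1}(r)$ gives the asserted decomposition, and the same argument applies verbatim in the irreducible case.

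For part (2) I would identify $\cN^\good_0$ with the image of the restriction map $\Hom^\good_\tau(\Gam,G)/G\to(\Hom^\good(\tGam,G)/G)^\tau$, $[\rho]\mapsto[\rho|_{\tGam}]$. An extension of $\tilde\rho$ to $\Gam$ is exactly a choice of $\rho(\delta)=g'$ inducing $c_\delta$ and satisfying $g'^2=\tilde\rho(\delta^2)$; writing $g'=gz$ this becomes $z^2=c^{-1}$, solvable iff $\beta([\tilde\rho])=0$, so the image is precisely $\cN^\good_0$. The solution set is a torsor under the $2$-torsion subgroup $Z(G)[2]$, and since the stabilizer of a good class is central, the $|Z(G)[2]|=|Z(G)/2Z(G)|$ extensions stay distinct after quotienting by $G$; the free residual action $\rho(\delta)\mapsto z_0\,\rho(\delta)$ for $z_0\in Z(G)[2]$ exhibits the restriction map as a Galois cover with deck group $Z(G)[2]$. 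When $|Z(G)|$ is odd, $Z(G)/2Z(G)$ is trivial, every fixed class lies in $\cN^\good_0$, and the cover degenerates to the claimed bijection; again $\Hom^\irr$ replaces $\Hom^\good$ throughout.

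For part (3), with $\Sig$ non-orientable and $G$ simple and simply connected, I would post-compose with $G\to PG=G/Z(G)$. Since $Z(PG)$ is trivial the obstruction of part (1) vanishes in $PG$, so every $\tau$-fixed irreducible $\tilde\rho$, after projection, extends to some $\rho\colon\Gam\to PG$, and checking that irreducibility survives the projection yields the surjection onto $\Hom^\irr_\tau(\Gam,PG)/PG$. It then remains to match $r=\beta([\tilde\rho])$ with the topological type of the associated flat $PG$-bundle: the cocycle $c=\tilde\rho(\delta^2)^{-1}g^2$ measures exactly the failure of a $G$-valued lift of the relation $\delta^2\in\tGam$, which is the obstruction class in $H^2(\Sig,Z(G))$ attached to the extension $1\to Z(G)\to G\to PG\to1$, and under $H^2(\Sig,Z(G))\cong Z(G)/2Z(G)$ this sends $\cN^\irr_r$ to bundles of type $r$ and $\cN^\irr_0$ to the topologically trivial ones. \emph{The main obstacle is precisely this last identification}: translating the group-theoretic class $[c]$ into the topological characteristic class and verifying equality under the isomorphism $H^2(\Sig,Z(G))\cong Z(G)/2Z(G)$ requires a careful comparison of the cocycle built from $\delta,\delta^2$ with a cell (or covering) description of $H^2$ of the non-orientable surface; the covering-space bookkeeping of part (2), namely local triviality and the Galois property on the quotient, is the secondary technical burden.
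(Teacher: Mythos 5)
Your parts (1) and (2) are correct and follow the paper's own route essentially verbatim: the intertwiner $g$ with $\phi\circ\Ad_c=\Ad_g\circ\phi$, the central element $z=g^2\phi(c^2)^{-1}$, the extension criterion $[z]=0$ in $Z(G)/2Z(G)$, and the free action of $\{s\in Z(G):s^2=e\}$ on the fibres of the restriction map are exactly Propositions~\ref{pro:pro1} and \ref{pro:pro2}. The only checks you gloss over are that $[z]$ is unchanged when the orbit representative is replaced by $\Ad_h\circ\phi$ and when the coset representative $c\in\Gam\setminus\tGam$ is changed (the paper computes $z'z^{-1}=t^{-2}\in2Z(G)$ for the latter); both are routine, as is the bookkeeping needed to upgrade your ``torsor over a fixed $\phi$'' statement to transitivity on the fibres of $R$ over a \emph{class} $[\phi]$.

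Part (3) is where the proposal falls short of a proof, on two counts. First, surjectivity onto $\Hom^\irr_\tau(\Gam,PG)/PG$ does not follow from ``checking that irreducibility survives the projection'': that only shows your map lands in the intended target. What is needed is the converse construction: given $\phi\in\Hom(\Gam,PG)$ with $\phi|_{\tGam}$ Ad-irreducible, produce $\tphi\in\Hom^\irr(\tGam,G)$ whose class is $\tau$-fixed and maps to $[\phi]$. The paper does this by hand (Proposition~\ref{pro:pro3}) using the presentation of $\pi_1(\tSig)$: choose arbitrary lifts of $\phi(a_i),\phi(b_i)$ to $G$, set $\tphi(a'_i)=\Ad_{\phi(c)}\tphi(b_i)$ and $\tphi(b'_i)=\Ad_{\phi(c)}\tphi(a_i)$, and verify the single surface relation using the fact that $\prod_i[\tphi(a_i),\tphi(b_i)]$ projects to $\phi(c)^2\in PG$ and is therefore fixed by $\Ad_{\phi(c)}$. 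Second, the identification of $[z]$ with the topological type of the resulting flat $PG$-bundle --- which is the actual assertion of part (3) --- is exactly the step you flag as ``the main obstacle'' and leave undone. In the paper this is not a delicate cohomological comparison but a short computation (Lemma~\ref{lm:pro3-1}): the obstruction map $O$ of \cite{HL1} is given explicitly on $[\check\phi]$ by lifting $\check\phi$ on the generators and evaluating the relator of $\pi_1(\Sig^\ell_k)$ in $Z(G)$; taking the lifts $\phi(a_i),\phi(b_i),g$ of $\check\phi(a_i),\check\phi(b_i),\check\phi(c)$ and using $\phi(c^2)=\prod_i[\phi(a_i),\phi(b_i)]$ yields $O([\check\phi])=[g^2\phi(c^2)^{-1}]=L([\phi])$. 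Without importing (or reproving) this explicit cocycle description of the obstruction class on a non-orientable surface, your part (3) remains an outline rather than a proof.
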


\vspace{-.25cm}

Here $\Hom^\good$, following the terminology of \cite{JM}, denotes the ``good''
part of the space of homomorphisms that are reductive and whose stabilizer is
$Z(G)$, whereas $\Hom^\irr$ is the space of homomorphisms whose composition with
the adjoint representation of $G$ is an irreducible representation
(see subsection~\ref{sec:red=red} for details).
$\Hom^\good_\tau(\Gam,G)$ is the set of homomorphisms from $\Gam$ to $G$ whose
restriction to $\tGam$ is ``good''.
$\Hom^\good_\tau(\Gam,G)\doubleslash G$ is not smooth in general, but contains
a smooth part $(\cM^\fla(P^\bC))^\circ$ (upon identification of moduli spaces).
By parts~(1) and (2) of the theorem, there is a local homeomorphism
$\Hom^\good_\tau(\Gam,G)/G\to(\Hom^\good(\tGam,G)/G)^\tau$ (see also
Corollary~\ref{cor:localdiff}), which in fact restricts to the local
diffeomorphism $\cM^\dR(P^\bC)^\circ\to(\cM^\dR(\tP^\bC)^\circ)^\tau$ in
part~(3) of Theorem~\ref{thm:Hitchin} but is now more accurately described
using representation varieties.
Also, for $\phi\in\Hom^\good(\tGam,G)$ such that
$[\phi]\in\Hom^\good(\tGam,G)/G$ is fixed by $\tau$, $L([\phi])$ is
the obstruction of extending $\phi$ to a representation of $\Gam$.
In the gauge-theoretic language, $\phi$ corresponds to a flat connection on
$\tM$ and represents a point fixed by $\tau$ in the de Rham moduli space
$\cM^\dR(\tP^\bC)$, while extension of $\phi$ to $\Gam$ means that the flat
connection on $\tM$ is the pull-back of a flat connection on $M$.
Flat connections on $\tM$ that are not pull-backs from $M$ correspond to flat
$PG$-bundles over $M$ (where $PG=G/Z(G)$).
This is shown in part~(3) of Theorem~\ref{thm:cover} and then discussed in
greater generality in the last section.

For example, let $G=SL(2,\bC)$, $M$ a compact nonorientable surface and $\tM$
its orientable double cover.
Then $(\Hom^\good(\pi_1(\tM),G)/G)^\tau$ is labeled by $Z(G)/2Z(G)=\bZ_2$,
i.e., $(\Hom^\good(\pi_1(\tM),G)/G)^\tau=\bigcup_{r\in \bZ_2}\cN^\good_r$.
An element of $(\Hom^\good(\pi_1(\tM),G)/G)^\tau$ is mapped by map $L$ in
Theorem~\ref{thm:cover}(1) (defined in Proposition~\ref{pro:pro1}) to the null
element of $\bZ_2$ if and only if it represents a flat connection on $\tM$
that is the pull-back of a flat connection on $M$.
The natural map from $\Hom^\good(\pi_1(M),G)/G$ to
$(\Hom^\good(\pi_1(\tM),G)/G)^\tau$ is not surjective;
it is a $\bZ_2$-sheeted Galois covering map onto $\cN^\good_0$,
and $\cN^\good_1$ is not in the image.
$\cN^\irr_0$ corresponds to the space of topologically trivial flat
$PSL(2,\bC)$-bundles over $M$ while $\cN^\irr_1$ corresponds to that of
topologically nontrivial flat $PSL(2,\bC)$-bundles over $M$.

The rest of this paper is organized as follows.
In Section~\ref{sec:gauge}, we review the basic setup in the orientable case
and explain the Donaldson-Corlette theorem for bundles over non-orientable
manifolds.
We then study finite dimensional symplectic and hyper-K\"ahler manifolds with
an involution and apply the results to the gauge theoretical setting to prove
Theorem~\ref{thm:Hitchin}.
In Section~\ref{sec:repr}, we study flat $G$-connections by representation
varieties.
We show that a flat connection on $M$ is reductive if and only if its pull-back
to $\tM$ is reductive.
We then define the continuous map in part~(1) of Theorem~\ref{thm:cover} and
prove the rest of the theorem.
In Section~\ref{sec:geom}, we relate the components $\cN^\good_r$ ($r\ne0$) in
Theorem~\ref{thm:cover} to $G$-bundles over $\tM$ admitting an involution up
to $Z(G)$.

We note that in order to study the moduli space of $G$-bundles over the nonorientable manifold $M$ itself, our involution is fixed-point free on $\tM$ and is the identity map on $G$.
During the revision of this paper, we came across a few related works.
We thank O.~Garc\'ia-Prada for pointing out to us the paper
\cite{BGH}, where their anti-holomorphic involution acts both on the manifold $\tM$ and on the
structure group $G$, thus resulting in a different fixed-point set
of the moduli space.
In a more recent paper \cite{BS1}, which overlaps with a special case of
part~(2) of our Theorem~\ref{thm:Hitchin} when $\tM$ is a surface, the
anti-holomorphic involution on the surface is allowed to have fixed points.

\section{The gauge-theoretic perspective}\label{sec:gauge}

\subsection{Basic setup in the orientable case}\label{sect:orient}

Let $K$ be a connected compact Lie group and let $G=K^\bC$ be its
complexification.
Given a principal $K$-bundle $P$ over a compact orientable manifold $M$,
$P^\bC=P\times_KG$ is a principal bundle whose structure group is $G$.
The set $\cA(P)$ of connections on $P$ is an affine space modeled on
$\Om^1(M,\ad P)$.
At each $A\in\cA(P)$, the tangent space is $T_A\cA(P)\cong\Om^1(M,\ad P)$.
The total space of the tangent bundle over $\cA(P)$ is
$T\cA(P)=\cA(P)\times\Om^1(M,\ad P)$.
At $(A,\psi)\in T\cA(P)$, the tangent space is
$T_{(A,\psi)}T\cA(P)\cong\Om^1(M,\ad P)^{\oplus2}$.
There is a translation invariant complex structure $J$ on $T\cA(P)$ given
by $J(\al,\vph)=(\vph,-\al)$.
The space $T\cA(P)$ can be naturally identified with $\cA(P^\bC)$, the set
of connections on $P^\bC\to M$, via $(A,\psi)\mapsto A-\ii\psi$, under which
$J$ corresponds to the complex structure on $\cA(P^\bC)$ induced by $G=K^\bC$.
The covariant derivative on $\Om^\bullet(M,\ad P^\bC)$ is $D:=d_A-\ii\psi$,
where $d_A$ denotes the covariant derivative of $A\in\cA(P)$ and $\psi$
acts by bracket.

The group of gauge transformations on $P$ is $\cG(P)\cong\Gam(M,\Ad P)$.
It acts on $\cA(P)$ via $A\mapsto g\cdot A$, where
$d_{g\cdot A}=g\circ d_A\circ g^{-1}$ and on $T\cA(P)$ via
$g\colon(A,\psi)\mapsto(g\cdot A,\Ad_g\psi)$.
Since the action of $\cG(P)$ on $T\cA(P)$ preserves $J$, there is a
holomorphic $\cG(P)^\bC$ action on $(T\cA(P),J)$.
In fact, the complexification $\cG(P)^\bC$ can be naturally identified
with $\cG(P^\bC)\cong\Gam(M,\Ad P^\bC)$, and the action of $\cG(P^\bC)$ on
$T\cA(P)$ corresponds to the complex gauge transformations on $\cA(P^\bC)$,
i.e., $g\in\cG(P^\bC)\colon D\mapsto g\circ D\circ g^{-1}$.
Let
\begin{align*}
\cA^\fla&(P^\bC)=\{A-\ii\psi\in\cA(P^\bC):F_{A-\ii\psi}=0\}  \\
&=\left\{(A,\psi)\in T\cA:F_A-\tfrac12[\psi,\psi]=0,d_A\psi=0\right\}
\end{align*}
be the set of flat connections on $P^\bC$.
Since the vanishing of $F_{A-\ii\psi}$ is a holomorphic condition,
$\cA^\fla(P^\bC)$ is a complex subset of $\cA(P^\bC)$; it is also
invariant under $\cG(P^\bC)$.
The holonomy group $\Hol(A)$ of $A\in\cA^\fla(P^\bC)$ can be identified as
a subgroup of $G$, up to a conjugation in $G$.
A flat connection $A$ on $P^\bC$ is {\em reductive} if the closure of $\Hol(A)$
in $G$ is contained in the Levi subgroup of any parabolic subgroup containing
$\Hol(A)$; let $\cA^{\fla,\red}(P^\bC)$ be the set of such.
It can be shown that a flat connection is reductive if and only if its
orbit under $\cG(P^\bC)$ is closed \cite{C}.
The {\em de Rham moduli space}, or the moduli space of reductive flat
connections on $P^\bC$, is
\[ \cM^\dR(P^\bC)=\cA^\fla(P^\bC)\doubleslash\cG(P^\bC)
   =\cA^{\fla,\red}(P^\bC)/\cG(P^\bC).  \]
It has an induced complex structure $\bar J$ on its smooth part.

Assume that $M$ has a Riemannian structure and choose an invariant inner
product $(\cdot,\cdot)$ on the Lie algebra $\fk$ of $K$.
Then there is a symplectic structure on $T\cA(P)$, with which $J$ is
compatible, given by
\begin{equation}\label{eqn:om-J}
\upom_J((\al_1,\vph_1),(\al_2,\vph_2))
=\int_M(\vph_2,\wedge*\al_1)-(\vph_1,\wedge*\al_2),
\end{equation}
where $\al_1,\al_2,\vph_1,\vph_2\in\Om^{1,0}(M,\ad P)$, such that
$(T\cA(P),\upom_J)$ is K\"ahler.
The subset $\cA^\fla(P^\bC)$ is K\"ahler in $\cA(P^\bC)\cong T\cA(P)$.
We identify the Lie algebra $\Lie(\cG(P))\cong\Om^0(M,\ad P)$ with its dual
by the inner product on $\Om^0(M,\ad P)$.
The action of $\cG(P)$ on $(T\cA(P),\upom_J)$ is Hamiltonian, with moment map
\begin{equation}\label{eqn:mu-J}
\upmu_J(A,\psi)=d_A^*\psi\in\Om^0(M,\ad P).
\end{equation}
Let
\begin{align*}
&\cA^\hit(P)=\cA^\fla(P^\bC)\cap\upmu_J^{-1}(0)   \\
=&\big\{(A,\psi)\in T\cA:F_A-\tfrac12[\psi,\psi]=0,d_A\psi=0,d_A^*\psi=0\big\},
\end{align*}
the set of pairs $(A,\psi)$ satisfying Hitchin's equations~\eqref{H-eq},
and let the quotient space $\cM^\hit(P)=\cA^\hit(P)/\cG(P)$ be {\em Hitchin's moduli space}.
A theorem of Donaldson \cite{Do} and Corlette \cite{C} states that if $M$
is compact and if the structure group $G$ is semisimple, then
$\cM^\hit(P)\cong\cM^\dR(P^\bC)$.

Suppose that $M$ is a compact K\"ahler manifold of complex dimension $n$
and let $\om$ be the K\"ahler form on $M$.
Then there is a complex structure on $T\cA(P)$ given by
\[ I\colon(\al,\vph)\mapsto\frac1{(n-1)!}*(\om^{n-1}\wedge(\al,-\vph))
=\frac1{(n-1)!}\,\Lam^{n-1}(*\al,-*\vph), \]
where $(\al,\vph)\in\Om^1(M,\ad P)^{\oplus2}\cong T_{(A,\psi)}T\cA(P)$
and the map
$$\Lam\colon\Om^\bullet(M,\ad P)\to\Om^{\bullet-2}(M,\ad P)$$ is the
contraction by $\om$.
With respect to $I$, we have
$$T_{(A,\psi)}^{1,0}T\cA(P)\cong\Om^{0,1}
(M,\ad P^\bC)\oplus\Om^{1,0}(M,\ad P^\bC)$$
for any $(A,\psi)\in T\cA(P)$.
This complex structure $I$ is compatible with a symplectic form $\upom_I$
on $T\cA(P)$ given by
\[ \upom_I((\al_1,\vph_1),(\al_2,\vph_2))=\int_M\frac{\om^{n-1}}{(n-1)!}
   \wedge\big((\al_1,\wedge\al_2)-(\vph_1,\wedge\vph_2)\big), \]
where $\al_1,\al_2,\vph_1,\vph_2\in\Om^1(M,\ad P)$.
The action of $\cG(P)$ on $T\cA(P)$ is also Hamiltonian with respect to
$\upom_I$ and the moment map is
\[ \upmu_I(A,\psi)=\Lam\,(F_A-\tfrac12[\psi,\psi])\in\Om^0(M,\ad P), \]
where $F_A\in\Om^2(M,\ad P)$ is the curvature of $A$.
Since the action of $\cG(P)$ on $T\cA(P)$ preserves $I$, there is a holomorphic
$\cG(P^\bC)$ action on $(T\cA(P),I)$.
For any $(A,\psi)\in T\cA(P)$, write $\psi=\ii(\phi-\phi^*)$, where
$\phi\in\Om^{1,0}(M,\ad P^\bC)$, $\phi^*\in\Om^{0,1}(M,\ad P^\bC)$.
Here $\phi\mapsto\phi^*$ is induced by the conjugation on $G=K^\bC$ preserving
the compact form $K$.
Then $D=d_A-\ii\psi=D'+D''$, where $D'=\partial_A-\phi^*$,
$D''=\bar\partial_A+\phi$.
The action of $\cG(P^\bC)$ on $T\cA(P)\cong\cA(P^\bC)$ can be described by
$g\in\cG(P^\bC)\colon D''\mapsto g\circ D''\circ g^{-1}$.

Let $\cA^\hig(P^\bC)$ be the set of Higgs pairs $(A,\phi)$, i.e., $A\in\cA(P)$
and $\phi\in\Om^{1,0}(M,\ad P^\bC)$ satisfying $(D'')^2=0$, or
\[ \bar\partial_A^2=0,\quad \bar\partial_A\phi=0,\quad [\phi,\phi]=0. \]
Then $\cA^\hig(P^\bC)$ is a K\"ahler subspace of $\cA(P^\bC)\cong T\cA(P)$
respect to $I$.
Let $\cA^\sst(P^\bC)$ be the set of semistable Higgs pairs
and let $\cA^\pst(P^\bC)$ be the set polystable Higgs pairs.
(The notions of stable, semistable and polystable Higgs pairs were introduced
in \cite{Hi,S92,S94}.)
The {\em moduli space of polystable Higgs pairs} or
the {\em Dolbeault moduli space} is
$$ \cM^\Dol(P^\bC)=(\cA^\hig(P^\bC)\cap\cA^\sst(P^\bC))\doubleslash\cG(P^\bC)
   =(\cA^\hig(P^\bC)\cap\cA^\pst(P^\bC))/\cG(P^\bC). $$
It has a complex structure induced by $I$.
It can be shown \cite[Lemma~1.1]{S92} that $\cA^\hit(P)
=\cA^\fla(P^\bC)\cap\upmu_J^{-1}(0)=\cA^\hig(P^\bC)\cap\upmu_I^{-1}(0)$.
A theorem of Hitchin \cite{Hi} and Simpson \cite{S88} states that if $M$ is
compact and K\"ahler and the bundle $P$ has vanishing first and second Chern
classes, then $\cM^\hit(P)\cong\cM^\Dol(P^\bC)$.

There is a third complex structure on $T\cA(P)$ defined by
\[ K=IJ=-JI\colon(\al,\vph)\mapsto\frac1{(n-1)!}*(\om^{n-1}\wedge(\vph,\al))
=\frac1{(n-1)!}\,\Lam^{n-1}(*\vph,*\al), \]
which is compatible with the symplectic form
\[ \upom_K((\al_1,\vph_1),(\al_2,\vph_2))=\int_M\frac{\om^{n-1}}{(n-1)!}
   \wedge\big((\al_1,\wedge\vph_2)-(\al_2,\wedge\vph_1)\big). \]
The action of $\cG(P)$ on $T\cA(P)$ is Hamiltonian with respect to $\upom_K$
and the moment map is
\[ \upmu_K(A,\psi)=\Lam\,(d_A\psi)\in\Om^0(M,\ad P).  \]
Moreover, the action preserves $K$ and therefore extends to another
holomorphic action of $\cG(P)^\bC$.
The three complex structures $I,J,K$ define a hyper-K\"ahler structure on
$T\cA(P)$.
Since the action of $\cG(P)$ on $T\cA(P)$ is Hamiltonian with respect to all
three symplectic forms, we have a hyper-K\"ahler moment map
$\upmu=(\upmu_I,\upmu_J,\upmu_K)\colon T\cA(P)\to(\Om^0(M,\ad P))^{\oplus3}$.
The {\em hyper-K\"ahler quotient} \cite{HKLR} is
$\cM^\HK(P)=\upmu^{-1}(0)/\cG(P)$, with complex structures
$\bar I,\bar J,\bar K$ and symplectic forms
$\bar\upom_I,\bar\upom_J,\bar\upom_K$.
By the theorems of Donaldson-Corlette and of Hitchin-Simpson, the Hitchin
moduli space $\cM^\hit(P)$ is a complex space with respect to both $\bar I$
and $\bar J$.
Therefore $\cM^\hit(P)$ is a hyper-K\"ahler subspace in $\cM^\HK(P)$
\cite[Theorem~8.3.1]{Fu}.

When $M=\Sig$ is an orientable surface,
$\Lam\colon\Om^2(\Sig,\ad P)\to\Om^0(\Sig,\ad P)$ is an isomorphism.
So $\cA^\hit(P)=\cA^\fla(P^\bC)\cap\upmu_J^{-1}(0)
=\cA^\hig(P^\bC)\cap\upmu_I^{-1}(0)$ coincides with
$\upmu^{-1}(0)=\upmu_I^{-1}(0)\cap\upmu_J^{-1}(0)\cap\upmu_K^{-1}(0)$.
Thus the moduli spaces
$\cM^\hit(P)\cong\cM^\dR(P^\bC)\cong\cM^\Dol(P^\bC)$ coincide with the
hyper-K\"ahler quotient $\cM^\HK(P)$ \cite{Hi}.

\subsection{Moduli space of Hitchin's equations on a non-orientable manifold}
\label{sec:DC}

Now suppose $M$ is a compact non-orientable manifold.
Let $\pi\colon\tM\to M$ be its oriented cover and let
$\tau\colon\tM\to\tM$ be the non-trivial deck transformation.
Given a principal $K$-bundle $P\to M$, let $\tP=\pi^*P\to\tM$ be
its pull-back to $\tM$.
Since $\pi\circ\tau=\pi$, the $\tau$ action can be lifted to $\tP=\tM\times_MP$
as a $K$-bundle involution (i.e., the lifted involution commutes with the right
$K$-action on $\tP$), and hence to the associated bundles $\Ad\tP$ and $\ad\tP$.
Consequently, $\tau$ acts on the space of connections $\cA(\tP)$ by pull-back
$A\mapsto\tau^*A$ and on the group of gauge transformations $\cG(\tP)$ by
$g\mapsto\tau^*g:=\tau^{-1}\circ g\circ\tau$.
The $\tau$-invariant subsets are $(\cA(\tP))^\tau\cong\cA(P)$ and
$(\cG(\tP))^\tau\cong\cG(P)$.
In fact, the inclusion map $\cA(P)\hookrightarrow\cA(\tP)$ onto the
$\tau$-invariant part is the pull-back via $\pi$ of connections on $P$ to
those on $\tP$.
Since $\cA(\tP)$ is an affine space modeled on $\Om^1(\tM,\ad\tP)$, the
differential $\tau_*$ of $\tau\colon\cA(\tP)\to\cA(\tP)$ can be identified
with a linear involution on $\Om^1(\tM,\ad\tP)$ given by $\al\mapsto\tau^*\al$.

A Riemannian metric on a non-orientable manifold $M$ pulls back to a Riemannian
metric on $\tM$.
Assuming that $M$ is compact, we define an inner product on the space
$\Om^\bullet(M)$ of differential forms on $M$ by
\[ \bra\al,\beta\ket=\tfrac12\int_{\tM}\pi^*\al\wedge\tilde*\,\pi^*\beta \]
for $\al,\beta\in\Om^\bullet(M)$, where $\tilde*$ is the Hodge star operator
on $\tM$.
Alternatively, the Hodge star $*$ on $M$ maps a form on $M$ to one valued in
the orientation line bundle over $M$, and if $\al,\beta$ are of the same degree,
then $\al\wedge*\beta$ is a top-degree form on $M$ valued in the orientation
line bundle, which can be integrated over $M$.
We still have $\bra\al,\beta\ket=\int_M\al\wedge*\beta$.
More generally, there is an inner product on the space $\Om^\bullet(M,\ad P)$
of forms valued in $\ad P$.
Therefore $\cA(P)$ admits a Riemannian structure, which is half of the
restriction of the Riemannian structure on $\cA(\tP)$ to the $\tau$-invariant
subspace $(\cA(\tP))^\tau\cong\cA(P)$.

Consider the tangent bundle $T\cA(\tP)=\cA(\tP)\times\Om^1(\tM,\ad\tP)$ of
$\cA(\tP)$.
It has a $\tau$-action given by
$\tau\colon(A,\psi)\mapsto(\tau^*A,\tau^*\psi)$, which is holomorphic with
respect to the complex structure $J$.
Therefore the fixed point set $(T\cA(\tP))^\tau\cong T\cA(P)$ is a complex
subspace in $T\cA(\tP)\cong\cA(\tP^\bC)$.
With respect to the induced Riemannian structure on $T\cA(\tP)$,
$\tau\colon T\cA(\tP)\to T\cA(\tP)$ is an isometry.
Since $\tau$ also acts holomorphically on $\cA(\tP^\bC)\cong T\cA(\tP)$,
$(T\cA(\tP))^\tau$ is a K\"ahler and totally geodesic subspace in
$T\cA(\tP)\cong\cA(\tP^\bC)$.
Moreover, $\cA^\fla(P^\bC)\cong(\cA^\fla(\tP^\bC))^\tau$ is also K\"ahler and
totally geodesic in $\cA^\fla(\tP^\bC)$.
We summarize the above discussion in the following lemma.

\begin{lem}
Given a compact non-orientable manifold $M$ with oriented double cover
$\pi\colon\tM\to M$ and a principal $K$-bundle $P\to M$, the non-trivial
deck transformation $\tau$ on $\tM$ lifts to an involution (also denoted
by $\tau$) on $\tP=\pi^*P$ and acts as involutions on the space of
connections $\cA(\tP)$ and on $T\cA(\tP)\cong\cA(\tP^\bC)$.
Moreover, the $\tau$-invariant subspaces $\cA(\tP^\bC)^\tau\cong\cA(P^\bC)$
and $\cA^\fla(\tP^\bC)^\tau\cong\cA^\fla(P^\bC)$ are K\"ahler and totally
geodesic subspaces in $\cA(\tP^\bC)\cong T\cA(\tP)$ and $\cA^\fla(\tP^\bC)$,
respectively.
\end{lem}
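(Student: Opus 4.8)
The plan is to treat the statement as an application of a single general principle: the fixed-point set of a holomorphic isometric involution of a K\"ahler manifold is a totally geodesic K\"ahler submanifold. Since the preceding paragraphs already introduce the lift of $\tau$ and the induced action on connections, the proof amounts to (i) pinning down the lift canonically so that it is genuinely an involution, (ii) identifying the fixed-point sets with the objects living over $P$, and (iii) checking that $\tau$ acts holomorphically and isometrically so that the general principle applies.

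First I would construct the lift explicitly from the fibre-product description $\tP=\pi^*P=\{(\tilde m,p)\in\tM\times P:\pi(\tilde m)=\rho(p)\}$, where $\rho\colon P\to M$ is the projection. Because $\pi\circ\tau=\pi$, the map $(\tilde m,p)\mapsto(\tau(\tilde m),p)$ is well defined into $\tP$, commutes with the right $K$-action (which only moves the $p$-factor), and squares to the identity since $\tau^2=\id$; this is the $K$-bundle involution of the statement, and it induces the displayed actions on $\cA(\tP)$, $\cG(\tP)$ and $T\cA(\tP)\cong\cA(\tP^\bC)$. I would then identify the fixed-point sets by descent: a $\tau$-invariant (flat) $G$-connection on $\tP^\bC$ descends to the quotient $\tP^\bC/\tau\cong P^\bC$, and conversely pull-back along $\pi$ produces $\tau$-invariant (flat) connections, giving the bijections $\cA(\tP^\bC)^\tau\cong\cA(P^\bC)$ and $\cA^\fla(\tP^\bC)^\tau\cong\cA^\fla(P^\bC)$. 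Flatness is preserved in both directions because $\pi$ is a local diffeomorphism.

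The geometric core is to verify the two hypotheses of the general principle. Holomorphicity with respect to $J$ is immediate because $J$ is the complex structure coming from $G=K^\bC$ and pull-back of connections commutes with it, so $\tau_*$ is complex-linear on each $T_{(A,\psi)}T\cA(\tP)\cong\Om^1(\tM,\ad\tP)^{\oplus2}$. For the isometry property I would use that a Riemannian metric on $M$ pulls back to a $\tau$-invariant metric on $\tM$, so the $L^2$-inner products defining the K\"ahler metric on $T\cA(\tP)$ are $\tau$-invariant and $\tau$ acts by isometries. With both hypotheses in hand, the fixed tangent space $\ker(\tau_*-\id)$ is $J$-invariant (since $\tau_*$ commutes with $J$), hence $(T\cA(\tP))^\tau$ is a complex submanifold and therefore K\"ahler in the induced metric; total geodesy follows from the standard reflection argument (an isometry sends geodesics to geodesics, so a geodesic whose initial data are fixed by $\tau$ is itself fixed by $\tau$ and stays in the fixed set). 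Applying the same argument to $\cA^\fla(\tP^\bC)$ with its induced K\"ahler metric, on which $\tau$ restricts to a holomorphic isometry, yields that $\cA^\fla(\tP^\bC)^\tau$ is K\"ahler and totally geodesic in $\cA^\fla(\tP^\bC)$.

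The main obstacle I anticipate is the infinite-dimensional setting rather than any single computation. For the ambient $T\cA(\tP)$ everything reduces to linear algebra: it is an affine space with a flat, translation-invariant K\"ahler structure, $\tau_*$ is a linear involution, and an affine subspace of a flat K\"ahler space is automatically totally geodesic, so here the claim is essentially formal. The delicate case is $\cA^\fla(\tP^\bC)$, which is cut out by the nonlinear holomorphic equation $F=0$ and is not affine; there ``totally geodesic in $\cA^\fla(\tP^\bC)$'' must be read with respect to the induced metric, and the reflection argument requires $\cA^\fla(\tP^\bC)$ to be a genuine submanifold with a well-defined Levi-Civita connection. I would therefore carry out this last step on the smooth locus, where Sobolev completions make $\cA^\fla(\tP^\bC)$ a Banach submanifold, so that the classical statement about fixed-point sets of isometries applies verbatim. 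One final point needing care is that the involution property $\tau^2=\id$ relies on the canonical fibre-product lift: a lift differing by a nontrivial gauge transformation over $\tM$ need not square to the identity, so I would emphasise that the lift used is exactly the one above.
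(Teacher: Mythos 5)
Your proposal is correct and follows essentially the same route as the paper: the paper's argument is precisely the discussion preceding the lemma, namely lifting $\tau$ to $\tP=\tM\times_MP$ as a $K$-bundle involution, identifying the fixed-point sets with pull-backs from $P$, and invoking the fact that the fixed-point set of a holomorphic isometry of a K\"ahler space is K\"ahler and totally geodesic. Your additional care about the canonical choice of lift and about Sobolev completions for the non-affine subset $\cA^\fla(\tP^\bC)$ is a welcome refinement of details the paper leaves implicit, not a different method.
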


On a non-orientable manifold $M$, we still have Hitchin's equations
\eqref{H-eq}.
Here $d_A^*$ is defined as the (formal) adjoint of $d_A$ with respect to
the inner products on $\Om^\bullet(M,\ad P)$.
Alternatively, $d_A^*$ is the first order differential operator on $M$ such
that on any orientable open set in $M$, $d_A^*=*^{-1}d_A\,*$; the latter is
actually independent of the choice of local orientation.
Yet another but related way to explain the operator $d_A^*$ is to consider
the Hodge star operator $*$ on a non-orientable manifold $M$ as a map from
differential forms to those valued in the orientation bundle over $M$.
Since the latter is a flat real line bundle, $d_A^*=*^{-1}d_A*$ maps
$\Om^1(M,\ad P)$ to $\Om^0(M,\ad P)$.
Finally, $d_A^*$ can be defined as $(\pi^*)^{-1}\circ d_{\pi^*A}^*\circ\pi^*$.
Here $d_{\pi^*A}^*=*^{-1}d_{\pi^*A}\,*$ holds globally on $\tM$ and
$\pi^*\colon\Om^\bullet(M,\ad P)\to\Om^\bullet(\tM,\ad\tP)$ is injective.
Let
\[ \cA^\hit(P)
:=\{(A,\psi)\in T\cA:F_A-\tfrac12[\psi,\psi]=0,d_A\psi=0,d_A^*\psi=0\}. \]
It is clear that $\cA^\hit(P)=(\cA^\hit(\tP))^\tau$.

The notion of reductive connections on $P$ does not depend on the
orientability of $M$, and we still have the moduli space of flat
connections $\cM^\fla(P^\bC)=\cA^{\fla,\red}(P^\bC)/\cG(P^\bC)$.
Let $\cM^\hit(P)=\cA^\hit(P)/\cG(P)$ be \\
Hitchin's moduli space.
The following is the Donaldson-Corlette theorem that also applies to the case
when $M$ is non-orientable.
Equivalently, there exists a unique reduction of structure group from $G$
to $K$ admitting a solution to Hitchin's equations.

\begin{thm}\label{thm:DC}
Let $M$ be a compact non-orientable Riemannian manifold.
Then for every reductive flat connection $D$ on $P^\bC$, there exists a gauge
transformation $g\in\cG(P^\bC)$ (unique up to $\cG(P)$ and the stabilizer of
$D$) such that $g\cdot D=d_A-\ii\psi$ with $(A,\psi)\in\cA^\hit(P)$.
As a consequence, we have a homeomorphism $\cM^\dR(P^\bC)\cong\cM^\hit(P)$.
\end{thm}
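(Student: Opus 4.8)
The plan is to reduce Theorem~\ref{thm:DC} to the Donaldson--Corlette theorem on the oriented cover $\tM$, which is compact, oriented and carries the $\tau$-invariant Riemannian metric pulled up from $M$, and then to descend a $\tau$-invariant solution back to $M$. First I would pull $D$ back to $\tilde D=\pi^*D$ on $\tP^\bC$; since $\pi\circ\tau=\pi$, this lies in $\cA^\fla(\tP^\bC)^\tau\cong\cA^\fla(P^\bC)$, so $\tau^*\tilde D=\tilde D$. Next I would check that $\tilde D$ is reductive on $\tM$: its holonomy is the restriction of $\Hol(D)$ to the index-two subgroup $\pi_1(\tM)\subset\pi_1(M)$, and complete reducibility passes to finite-index subgroups in characteristic zero (this is also established independently in Section~\ref{sec:repr}). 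Finally, although $\tau$ is orientation-reversing, it is an isometry, and $d_A^*$ is $\tau$-equivariant because the two sign changes coming from $\tau^*\circ*=-*\circ\tau^*$ cancel in $d_A^*=*^{-1}d_A*$; hence $\tau$ preserves $\cA^\hit(\tP)$ and acts compatibly on $\cG(\tP^\bC)$, with $\tau^*(g\cdot A)=(\tau^*g)\cdot(\tau^*A)$.

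The heart of the argument is to produce a $\tau$-\emph{invariant} harmonic reduction. By Corlette's theorem \cite{C} on $\tM$ there is some $\tilde g\in\cG(\tP^\bC)$ with $\tilde g\cdot\tilde D\in\cA^\hit(\tP)$. Rather than manipulate $\tilde g$ directly, I would work in the space of Hermitian reductions $\cG(\tP^\bC)/\cG(\tP)$, an infinite-dimensional symmetric space of non-compact type that is complete and non-positively curved, on which $\tau$ acts by isometries. The set $\cH$ of harmonic reductions for $\tilde D$ is non-empty (Corlette), closed, and totally geodesically convex, and it is preserved by $\tau$ by the equivariance just noted. A Cartan-type fixed-point theorem (the circumcenter of the $\tau$-orbit of a point of $\cH$, here simply the midpoint of the geodesic joining a reduction to its $\tau$-image, lies in $\cH$ and is $\tau$-fixed) then yields a $\tau$-invariant harmonic reduction. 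Such a reduction descends to $P^\bC\to M$; equivalently $\tilde g$ may be chosen in $\cG(\tP^\bC)^\tau\cong\cG(P^\bC)$, whence $\tilde g\cdot\tilde D$ is $\tau$-invariant and descends to a pair $(A,\psi)\in\cA^\hit(P)$ with $g\cdot D=d_A-\ii\psi$.

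For uniqueness, any two harmonic reductions for the reductive $\tilde D$ differ by an element of $\mathrm{Stab}(\tilde D)$ acting on the space of reductions; intersecting with the $\tau$-invariant reductions gives $g$ up to $\cG(P)$ and $\mathrm{Stab}(D)$ downstairs. The homeomorphism $\cM^\dR(P^\bC)\cong\cM^\hit(P)$ then follows: the natural map $\cM^\hit(P)\to\cM^\dR(P^\bC)$, $[(A,\psi)]\mapsto[d_A-\ii\psi]$, is well defined and continuous, surjective by the existence just proved and injective by uniqueness, and continuity of its inverse follows $\tau$-equivariantly from the continuous dependence of the harmonic reduction on $D$ in the orientable case. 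I expect the main obstacle to be the fixed-point step: one must set up the space of reductions in a Sobolev completion in which completeness and the $\mathrm{CAT}(0)$ (convexity) property hold, and verify that $\cH$ is convex and $\tau$-invariant so that the circumcenter argument applies; the reductivity-preservation input is the other point requiring care.
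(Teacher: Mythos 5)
Your proposal is correct in outline, and the preparatory steps (pulling back to $\tM$, reductivity passing to the index-two subgroup $\tGam\subset\Gam$, the $\tau$-equivariance of $d_A^*$ via the cancelling sign changes in $*^{-1}d_A*$) coincide exactly with what the paper does; in particular the reductivity step is the paper's Lemma~\ref{lem:reductive} and Corollary~\ref{cor:red=red}, proved there by Clifford theory. Where you genuinely diverge is at the heart of the argument. The paper does not pass to the space of Hermitian reductions at all: it runs Corlette's gradient flow \eqref{eqn:connection-flow}--\eqref{eqn:gauge-flow} on $\cA(\tP^\bC)$ (or on $\cG(\tP^\bC)$), observes that the right-hand sides are $\tau$-invariant vector fields, and invokes the elementary uniqueness-of-integral-curves argument (Lemma~\ref{lem:restricted-flow}) to conclude that a flow starting at a $\tau$-fixed reductive connection stays in the $\tau$-fixed locus; since that locus is closed, Corlette's convergence result delivers a $\tau$-invariant solution of $\upmu_J=0$ in the limit. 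Your route instead takes Corlette's \emph{conclusion} (existence of a harmonic reduction upstairs) as input and symmetrizes it by a Cartan fixed-point/midpoint argument in the NPC space $\cG(\tP^\bC)/\cG(\tP)$, using convexity of the energy to see that the set $\cH$ of harmonic reductions is geodesically convex and $\tau$-stable. Both are sound; the paper's version needs essentially no machinery beyond uniqueness of ODE solutions and the closedness of the fixed locus, and inherits convergence for free from Corlette, whereas yours requires setting up the infinite-dimensional NPC structure and verifying convexity of $\cH$ (which you correctly flag as the delicate point), but in exchange is more conceptual and would extend verbatim to actions of larger finite groups. One small caution on your uniqueness paragraph: knowing that two $\tau$-invariant harmonic reductions differ by an element of $\mathrm{Stab}(\tilde D)\cdot\cG(\tP)$ does not by itself place that element in $(\mathrm{Stab}(\tilde D)\cdot\cG(\tP))^\tau=\mathrm{Stab}(D)\cdot\cG(P)$ without a short extra argument (e.g.\ averaging or a cohomological vanishing for the $\bZ_2$-action on the stabilizer); the paper is equally terse here, but if you write this up you should close that gap explicitly.
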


We now explain that Corlette's proof in \cite{C} applies to the case when $M$
is non-orientable.
There is a symplectic form $\upom_J$ on $T\cA(P)$, still given
by\eqref{eqn:om-J}, which is half of the restriction of the symplectic form on
$T\cA(\tP)$.
The action of $\cG(P)$ on $T\cA(P)$ is Hamiltonian, and the moment map remains
\eqref{eqn:mu-J}.
Recall Corlette's flow equations on the space of flat connections.
Let $D=d_A-\ii\psi$ be a flat connection of the $G=K^\bC$ bundle $P^\bC\to M$.
Then the flow equations are
\begin{equation}\label{eqn:connection-flow}
\frac{\partial D}{\partial t}=-D\upmu_J(D).
\end{equation}
Equivalently, one can look for a flow of the form $g(t)\cdot D_0$ and solve
for $g(t)\in\cG(\tP^\bC)$ using (cf.~\cite[p.~369]{C})
\begin{equation}\label{eqn:gauge-flow}
\frac{\partial g}{\partial t}g^{-1}=-\ii\upmu_J(g\cdot D_0).
\end{equation}
Corlette shows in \cite{C} that we have existence and uniqueness of solutions
to \eqref{eqn:connection-flow} and \eqref{eqn:gauge-flow} for all time.
If the initial condition is a reductive flat connection, then there is a
sequence converging to a solution to $\upmu_J(D)=0$.
Also, the limit is gauge equivalent to the initial flat reductive connection
\cite{C}.
These arguments are valid when $M$ is non-orientable.

We remark that Theorem~\ref{thm:DC} for non-orientable manifolds also follows
from the result of the orientable double cover.
A flat connection on $P$ is reductive if and only if the pull-back $\pi^*A$
is a flat reductive connection on $\tP$.
(We defer the proof of this statement to Corollary~\ref{cor:red=red}.)
For the bundle $\tP\to\tM$, it is easy to check that the right-hand sides of
\eqref{eqn:connection-flow} and \eqref{eqn:gauge-flow} define $\tau$-invariant
vector fields on $\cA(\tP^\bC)$ and $\cG(\tP^\bC)$, respectively.
Since the space $(\cA^\fla(\tP^\bC))^\tau$ of $\tau$-invariant connections
is closed in $\cA^\fla(\tP^\bC)$ and the space $(\cG(\tP^\bC))^\tau$ of
$\tau$-invariant gauge transformations is closed in $\cG(\tP^\bC)$,
Corlette's results on the limit of the flow restrict to the $\tau$-invariant
subset as well.
That is, the flow on the space of connections is contained in the
$\tau$-invariant subset and the limit is a $\tau$-invariant solution to
Hitchin's equation.
Similarly, the gauge transformation relating to the initial condition is
contained in the $\tau$-invariant part of the group of gauge transformations,
and the limit is $\tau$-invariant.

\subsection{The Hitchin moduli space and the hyper-K\"ahler quotient}
\label{sec:Hitchin}

Now consider a compact non-orientable manifold $M$.
Suppose its oriented cover $\tM$ is a K\"ahler manifold of complex dimension
$n$.
Let $\om$ be the K\"ahler form on $\tM$.
Throughout this subsection, we assume that $n$ is odd and the deck
transformation $\tau$ on $\tM$ is an anti-holomorphic involution
such that $\tau^*\om=-\om$.
Then $\tau^*\om^n=-\om^n$, which is consistent with the requirement that $\tau$
is orientation reversing.
The $\tau$-action on $T\cA(\tP)=\cA(\tP)\times\Om^1(M,\ad P)$,
$\tau\colon(A,\psi)\mapsto(\tau^*A,\tau^*\psi)$, is an isometry and its
differential $\tau_*\colon\Om^1(M,\ad P)^{\oplus2}\to\Om^1(M,\ad P)^{\oplus2}$
is $\tau_*\colon(\al,\vph)\mapsto(\tau^*\al,\tau^*\vph)$.
It is easy to see that $\tau_*\circ I=-I\circ\tau_*$ since $\tau$ reverses
the orientation of $M$ and that $\tau_*\circ K=-K\circ\tau_*$ since $K=IJ$.
So $\tau$ acts as an anti-holomorphic involution with respect to both $I$ and
$K$, and $\tau^*\upom_I=-\upom_I$, $\tau^*\upom_K=-\upom_K$.
Moreover, since the moment maps $\upmu_I$ and $\upmu_K$ on $T\cA(\tP)$ involve
the contraction $\Lam$ by $\om$, they satisfy
$\tau^*(\upmu_I(A,\psi))=-\upmu_I(\tau^*A,\tau^*\psi)$,
$\tau^*(\upmu_K(A,\psi))=-\upmu_K(\tau^*A,\tau^*\psi)$ for all
$(A,\psi)\in T\cA(\tP)$.
The fixed point set $(\cA(\tP))^\tau$ is totally real with respect to the
complex structures $I$ and $K$, and Lagrangian with respect to the symplectic
forms $\upom_I$ and $\upom_K$ \cite{Me,Du,OS}.

A flat connection $D=d_A-\ii\psi$ on $\tP^\bC$ defines an elliptic complex
with $D_i\colon\Om^i(\tM,\ad\tP^\bC)\to\Om^{i+1}(\tM,\ad\tP^\bC)$.
Let $\cA^\fla(\tP^\bC)^\circ$ be the set of flat connections on $\tP^\bC$
such that (i) the stabilizer under the $\cG(\tP^\bC)$ action is $Z(G)$, and
(ii) the linearization $D_1$ of the curvature map surjects onto
$\ker D_2\cap\Om^2(\tM,[\ad\tP^\bC,\ad\tP^\bC])$.
Notice that when $M$ is a surface, condition~(i) implies (ii).
The method in \cite{Ki} and \cite[Chapter VII]{Ko} shows that
$\cA^\fla(\tP^\bC)^\circ$ is a smooth submanifold in $\cA(\tP^\bC)$,
and as the action of $\cG(\tP^\bC)/Z(G)$ on it is free, the subset
$\cM^\dR(\tP^\bC)^\circ:=(\cA^\fla(\tP^\bC)^\circ\cap\cA^{\fla,\red}(\tP^\bC))/
\cG(\tP^\bC)$ is in the smooth part of the moduli space $\cM^\dR(\tP^\bC)$
(see also \cite{Go} from the point of view of representation varieties).
The free action of $\cG(\tP^\bC)/Z(G)$ or $\cG(\tP)/Z(K)$ from condition~(i)
implies that $0$ is a regular value of $\upmu_J$ on $\cA^\fla(\tP^\bC)^\circ$,
and the subset
$\cM^\hit(\tP)^\circ:=\cA^\fla(\tP^\bC)^\circ\cap\upmu_J^{-1}(0)/\cG(\tP)$
is in the smooth part of Hitchin's moduli space $\cM^\hit(\tP)$ \cite{Hi}.
By the Donaldson-Corlette theorem, we have the homeomorphism
$\cM^\hit(\tP)^\circ\cong\cM^\dR(\tP^\bC)^\circ$.

On the other hand, for the non-orientable manifold $M$, let
$\cA^\fla(P^\bC)^\circ=\{A\in\cA(P^\bC):\pi^*A\in\cA^\fla(\tP^\bC)^\circ\}$,
$\cA^\hit(P)^\circ=\cA^\hit(P)\cap\cA^\fla(P^\bC)^\circ$. \\
Then $\cM^\hit(P)^\circ:=\cA^\hit(P)^\circ/\cG(P)$ is in the smooth part of
$\cM^\hit(P)$, but we will not consider here the smooth points of
$\cM^\hit(P)$ that are outside $\cM^\hit(P)^\circ$.
By Theorem~\ref{thm:DC} (the analog of the Donaldson-Corlette theorem for
non-orientable manifolds), we have a homeomorphism between
$\cM^\hit(P)^\circ$ and $\cM^\dR(P^\bC)^\circ:=
(\cA^\fla(P^\bC)^\circ\cap\cA^{\fla,\red}(P^\bC))/\cG(P^\bC)$.

We now study a general setting.
Let $(X,\om)$ be a finite dimensional symplectic manifold with a Hamiltonian
action of a compact Lie group $K$ and let $\mu\colon X\to\fk^*$ be the moment
map.
Suppose as in \cite{OS}, that there are involutions $\sig$ on $X$ and $\tau$
on $K$ such that $\sig(k\cdot x)=\tau(k)\cdot\sig(x)$ for all $k\in K$ and
$x\in X$.
Assume that $X^\sig$ is not empty.
Then $K^\tau$ acts on $X^\sig$.
We note that $\tau$ acts on $\fk$, $\fk^*$, and $K^\tau$ is a closed
Lie subgroup of $K$ with Lie algebra $\fk^\tau$.
Contrary to \cite{OS}, we assume that the action of $(K,K^\tau)$ on
$(X,X^\sig)$ is symplectic, i.e, we have $\sig^*\om=\om$ and
$\sig^*\mu=\tau\mu$.
Then $X^\sig$ is a symplectic submanifold in $X$.
Assume that $0$ is a regular value of $\mu$ and that $K$ acts on $\mu^{-1}(0)$
freely.
Since $\sig$ preserves $\mu^{-1}(0)$, it descends to a symplectic involution
$\bar\sig$ on the (smooth) symplectic quotient $X\doubleslashz K=\mu^{-1}(0)/K$
at level $0$, and $(X\doubleslashz K)^{\bar\sig}$ is a symplectic submanifold.

\begin{lem}\label{lem:fd}
In the above setting, the action of $K^\tau$ on $X^\sig$ is Hamiltonian and
the symplectic quotient is
$X^\sig\doubleslashz K^\tau=(\mu^{-1}(0)\cap X^\sig)/K^\tau$.
If $\mu^{-1}(0)\cap X^\sig\ne\emptyset$, then there exists a symplectic local
diffeomorphism from $X^\sig\doubleslashz K^\tau$ to
$(X\doubleslashz K)^{\bar\sig}$.
\end{lem}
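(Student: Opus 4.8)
The plan is to identify explicitly the moment map for the $K^\tau$-action on $X^\sig$, deduce from it that the symplectic quotient has the stated form, and then show that the natural comparison map to $(X\doubleslashz K)^{\bar\sig}$ is a symplectomorphism onto an open subset. First I would establish the Hamiltonian structure. The hypothesis $\sig^*\mu=\tau\mu$ gives, for every $x\in X^\sig$, that $\mu(x)=\mu(\sig(x))=\tau(\mu(x))$, so $\mu$ restricts to a map $\mu^\tau\colon X^\sig\to(\fk^*)^\tau\cong(\fk^\tau)^*$. For $\xi\in\fk^\tau$ the generating vector field $\xi_X$ is tangent to $X^\sig$ (since $K^\tau$ preserves $X^\sig$), and as $\om|_{X^\sig}$ is the symplectic form of the symplectic submanifold $X^\sig$, the identity $\iota_{\xi_X}\om=d\langle\mu,\xi\rangle$ restricts to $\iota_{\xi_X}(\om|_{X^\sig})=d\langle\mu^\tau,\xi\rangle$. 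Hence $\mu^\tau$ is a moment map for the $K^\tau$-action on $X^\sig$, and because $\mu^\tau(x)=0$ is equivalent to $\mu(x)=0$ for $x\in X^\sig$, we obtain $(\mu^\tau)^{-1}(0)=\mu^{-1}(0)\cap X^\sig$.

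Next I would check the regularity needed for a smooth quotient. Freeness of $K^\tau$ on $(\mu^\tau)^{-1}(0)$ is inherited from freeness of $K$ on $\mu^{-1}(0)$, since $K^\tau\subseteq K$ and $(\mu^\tau)^{-1}(0)\subseteq\mu^{-1}(0)$. Using the standard identity $(\im d\mu^\tau_x)^\circ=(\fk^\tau)_x$ valid for any moment map, surjectivity of $d\mu^\tau_x$ is equivalent to triviality of the $K^\tau$-stabilizer Lie algebra, which follows from freeness. Thus $0$ is a regular value of $\mu^\tau$ and $X^\sig\doubleslashz K^\tau=(\mu^{-1}(0)\cap X^\sig)/K^\tau$ is a smooth symplectic manifold; this proves the first assertion.

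For the comparison map, the inclusion $\mu^{-1}(0)\cap X^\sig\hookrightarrow\mu^{-1}(0)$ descends to $f\colon X^\sig\doubleslashz K^\tau\to X\doubleslashz K$, whose image lies in $(X\doubleslashz K)^{\bar\sig}$ because $\sig$ fixes every point of $\mu^{-1}(0)\cap X^\sig$. Pulling back the reduced form $\bar\upom$ of $X\doubleslashz K$ along $f$ and using that both reduced forms are obtained by restricting $\om$ (respectively $\om|_{X^\sig}$) to the zero level set, a diagram chase through the two quotient submersions gives $f^*(\bar\upom|_{(X\doubleslashz K)^{\bar\sig}})$ equal to the reduced form on $X^\sig\doubleslashz K^\tau$. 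As both reduced forms are nondegenerate, this alone forces $df$ to be injective, so $f$ is already a symplectic immersion.

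The step I expect to be the main obstacle is the dimension count establishing that $f$ is a local diffeomorphism. Here $\dim(X^\sig\doubleslashz K^\tau)=\dim X^\sig-2\dim\fk^\tau$ is immediate. For $(X\doubleslashz K)^{\bar\sig}$, at $[x]$ with $x\in\mu^{-1}(0)\cap X^\sig$ I would model $T_{[x]}(X\doubleslashz K)=(\fk\cdot x)^\om/(\fk\cdot x)$ as the linear symplectic reduction of $V:=T_xX$ by the isotropic subspace $W:=\fk\cdot x$ (isotropic since $\mu(x)=0$), on which $d\sig_x$ is a symplectic involution acting as $d\tau$ through the identification $\fk\cong\fk\cdot x$ coming from freeness (differentiate $\sig(k\cdot x)=\tau(k)\cdot x$). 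Splitting $V=V^+\oplus V^-$ into the $\pm1$-eigenspaces of $d\sig_x$, which are $\om$-orthogonal symplectic subspaces with $V^+=T_xX^\sig$, one checks that $(\fk\cdot x)^\om$ and $\fk\cdot x$ split compatibly, so the $+1$-eigenspace of $d\bar\sig$ is the reduction of $V^+$ by $W^+=W\cap V^+$, where $\dim W^+=\dim\fk^\tau$. Hence $\dim(X\doubleslashz K)^{\bar\sig}=\dim V^+-2\dim\fk^\tau=\dim X^\sig-2\dim\fk^\tau=\dim(X^\sig\doubleslashz K^\tau)$. With equal dimensions and $df$ injective, $f$ is a symplectic local diffeomorphism onto $(X\doubleslashz K)^{\bar\sig}$, as claimed; it need not be globally injective, since a single $K$-orbit may meet $X^\sig$ in several $K^\tau$-orbits that $f$ collapses.
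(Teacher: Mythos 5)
Your argument is correct and follows essentially the same route as the paper's: the moment map for $K^\tau$ on $X^\sig$ is the projection of $\mu$ to $(\fk^\tau)^*$, the zero level set is $\mu^{-1}(0)\cap X^\sig$, and the map induced by the inclusion has bijective differential --- the paper proves surjectivity of that differential directly by the averaging $V\mapsto\tfrac12(V+\sig V)$, which is the same mechanism hidden inside your eigenspace dimension count. The only slip is the final word ``onto'': a local diffeomorphism between equidimensional manifolds need not surject onto $(X\doubleslashz K)^{\bar\sig}$ (a $\sig$-invariant $K$-orbit in $\mu^{-1}(0)$ may contain no $\sig$-fixed point), and neither the lemma nor the paper claims surjectivity.
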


\begin{proof}
Let $\fk=\fk^\tau\oplus\mathfrak{q}$ such that $\tau=\pm1$ on $\fk^\tau$,
$\mathfrak{q}$, respectively.
It is clear that the action of $K^\tau$ on $X^\sig$ is Hamiltonian and the
moment map $\mu_\tau$ is the composition
$X^\sig\hookrightarrow X\to\fk^*\to(\fk^\tau)^*$.
Since for any $x\in X^\sig$, $\bra\mu(x),\mathfrak{q}\ket=0$, we get
$\mu_\tau^{-1}(0)=\mu^{-1}(0)\cap X^\sig=(\mu^{-1}(0))^\sig$.
By the assumptions, $0$ is a regular value of $\mu_\tau$, the action of
$K^\tau$ on $\mu_\tau^{-1}(0)$ is free, and the symplectic quotient is
$X^\sig\doubleslashz K^\tau=(\mu^{-1}(0)\cap X^\sig)/K^\tau$.

For any $x\in X^\sig$, the map $\fk\to T_xX$ intertwines $\tau$ on $\fk$ and
$\sig$ on $T_xX$, and $T_x(K^\tau\cdot x)=(T_x(K\cdot x))^\sig$.
The inclusion $\mu_\tau^{-1}(0)\hookrightarrow\mu^{-1}(0)$ induces a natural
map $X^\sig\doubleslashz K^\tau\to(X\doubleslashz K)^{\bar\sig}$, whose
differentiation at $[x]$ is, after natural symplectic isomorphisms, the linear
map $(T_x\mu^{-1}(0))^\sig/(T_x(K\cdot x))^\sig
\to(T_x\mu^{-1}(0)/T_x(K\cdot x))^{\bar\sig}$.
The latter is clearly injective; to show surjectivity, we note that for any
$V\in T_x\mu^{-1}(0)$, if
$V+T_x(K\cdot x)\in(T_x\mu^{-1}(0)/T_x(K\cdot x))^{\bar\sig}$, then it is the
image of $\frac12(V+\sig V)+(T_x(K\cdot x))^\sig$.
The map $X^\sig\doubleslashz K^\tau\to(X\doubleslashz K)^{\bar\sig}$ is a local
diffeomorphism; it is symplectic because the above linear map is so for each
$x\in\mu_\tau^{-1}(0)$.
\end{proof}

Now let $X$ be a hyper-K\"ahler manifold with complex structures $J_i$ and
symplectic structures $\om_i$ ($i=1,2,3$).
Suppose $K$ acts on $X$ and the action is Hamiltonian with respect to all
$\om_i$.
Let $\mu=(\mu_1,\mu_2,\mu_3)\colon X\to(\fk^*)^{\oplus3}$ be the hyper-K\"ahler
moment map.
Assume that there are involutions $\sig$ on $X$ and $\tau$ on $K$ such that
$\sig(k\cdot x)=\tau(k)\cdot\sig(x)$ for all $k\in K$ and $x\in X$ and
$\sig^*J_i=(-1)^iJ_i$, $\sig^*\om_i=(-1)^i\om_i$, $\sig^*\mu_i=(-1)^i\tau\mu_i$
for $i=1,2,3$.
So the action of $(K,K^\tau)$ on $(X,X^\sig)$ is symplectic with respect to
$\om_2$ (as above) and anti-symplectic with respect to $\om_1,\om_3$ (as in
\cite{OS}).
Then $X^\sig$, if non-empty, is K\"ahler and totally geodesic in $X$ with
respect to $J_2,\om_2$ and is totally real and Lagrangian with respect to
$J_1,\om_1$ and $J_3,\om_3$.
If $0$ is a regular value of $\mu$ (i.e., $0$ is a regular value of each
$\mu_i$) and that $K$ acts on $\mu^{-1}(0)$ freely, then
$X\tripleslash K=\mu^{-1}(0)/K$ is the (smooth) hyper-K\"ahler quotient at
level $0$, which has complex structures $\bar J_i$ and symplectic structures
$\bar\om_i$ ($i=1,2,3$) \cite{HKLR}.

\begin{pro}\label{pro:fd}
In the above setting, let $Y=\mu_1^{-1}(0)\cap\mu_3^{-1}(0)$.
Then\\
1. $Y$ is a $\sig$-invariant K\"ahler submanifold in $X$ with respect to
$J_2,\om_2$ and the symplectic quotient
$Y^\sig\doubleslashz K^\tau=(\mu^{-1}(0))^\sig/K^\tau$ is K\"ahler;\\
2. $(X\tripleslash K)^{\bar\sig}$ is K\"ahler and totally geodesic in
$X\tripleslash K$ with respect to $\bar J_2,\bar\om_2$ and is totally real and
Lagrangian with respect to $\bar J_1,\bar J_3$ and $\bar\om_1,\bar\om_3$;\\
3. if $(\mu^{-1}(0))^\sig\ne\emptyset$, there is a K\"ahler (with respect to
$\bar J_2,\bar\om_2$) local diffeomorphism
$Y^\sig\doubleslashz K^\tau\to(X\tripleslash K)^{\bar\sig}$.
\end{pro}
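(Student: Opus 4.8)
The plan is to deduce all three parts from the symplectic model of Lemma~\ref{lem:fd} by passing to the complex-moment-map zero set $Y$ and carrying out the remaining reduction only in the $\om_2$-direction; in other words, to exhibit the hyper-K\"ahler quotient as an ordinary K\"ahler reduction of the K\"ahler manifold $Y$.

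First I would establish part~1 by showing that $Y=\mu_1^{-1}(0)\cap\mu_3^{-1}(0)$ is a $\sig$-invariant K\"ahler submanifold of $(X,J_2,\om_2)$. The key hyper-K\"ahler fact is that, for the cyclic ordering $J_1J_2=J_3$, the combination $\mu_{\bC}=\mu_3+\ii\mu_1$ is the holomorphic moment map for the action on $(X,J_2)$, hence holomorphic with respect to $J_2$; its zero locus is exactly $Y$ (the same set regardless of the ordering convention), so $Y$ is a complex submanifold of $(X,J_2)$ and inherits the ambient K\"ahler structure $(J_2,\om_2)$. Smoothness of $Y$, and the fact that $0$ is a regular value of $\mu_2|_Y$ with $K$ acting freely on $(\mu_2|_Y)^{-1}(0)$, follow from the standing assumptions that $0$ is a regular value of $\mu$ and $K$ acts freely on $\mu^{-1}(0)$, using the quaternionic relations among the fundamental vector fields. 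For $\sig$-invariance I would use $\sig^*\mu_1=-\tau\mu_1$ and $\sig^*\mu_3=-\tau\mu_3$, which show that $\sig$ preserves $\mu_1^{-1}(0)$ and $\mu_3^{-1}(0)$; combined with $\sig^*J_2=J_2$ and $\sig^*\om_2=\om_2$, this makes $\sig|_Y$ a holomorphic isometric involution, so $Y^\sig$ is K\"ahler and totally geodesic in $Y$.

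Next I would apply Lemma~\ref{lem:fd} to the K\"ahler manifold $(Y,\om_2)$ with the restricted Hamiltonian $K$-action, moment map $\mu_2|_Y$, and the involutions $\sig|_Y$, $\tau$. Its hypotheses hold: $\sig^*\om_2=\om_2$ and $\sig^*\mu_2=\tau\mu_2$ make the action of $(K,K^\tau)$ on $(Y,Y^\sig)$ symplectic, and the regularity and freeness conditions have just been arranged. The decisive identification is reduction in stages: $Y\doubleslashz K=(\mu_2|_Y)^{-1}(0)/K=\mu^{-1}(0)/K=X\tripleslash K$, and the K\"ahler structure $(\bar J_2,\bar\om_2)$ obtained by reducing $Y$ agrees with the one coming from the hyper-K\"ahler quotient. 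Lemma~\ref{lem:fd} then gives $Y^\sig\doubleslashz K^\tau=((\mu_2|_Y)^{-1}(0)\cap Y^\sig)/K^\tau=(\mu^{-1}(0))^\sig/K^\tau$, which is K\"ahler as a K\"ahler reduction of $Y$; this is part~1. Part~3 is the local diffeomorphism $Y^\sig\doubleslashz K^\tau\to(Y\doubleslashz K)^{\bar\sig}=(X\tripleslash K)^{\bar\sig}$ supplied by the same lemma; it is symplectic for $\bar\om_2$, and I would upgrade it to a K\"ahler map by observing that the differential written in the proof of Lemma~\ref{lem:fd}, namely inclusion followed by projection, is complex linear for the $J_2$-induced structures because both $\sig$ and the $K$-action preserve $J_2$.

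For part~2 I would check that the descended involution $\bar\sig$ on $X\tripleslash K$ inherits $\bar\sig^*\bar J_i=(-1)^i\bar J_i$ and $\bar\sig^*\bar\om_i=(-1)^i\bar\om_i$ from the corresponding identities for $\sig$, by functoriality of the hyper-K\"ahler quotient; part~2 is then precisely the general statement about an involution on a hyper-K\"ahler manifold recorded just before the proposition, applied to $X\tripleslash K$ with $\bar\sig$. The step I expect to be the main obstacle is the reduction-in-stages identification: beyond the set-theoretic equality $Y\doubleslashz K=X\tripleslash K$, one must verify that the complex structure $\bar J_2$ produced by first restricting to the complex submanifold $Y$ and then performing K\"ahler reduction coincides with the $\bar J_2$ produced directly by the hyper-K\"ahler quotient, so that the map in part~3 is genuinely K\"ahler and not merely symplectic.
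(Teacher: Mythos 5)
Your proposal is correct and follows essentially the same route as the paper: introduce the $J_2$-holomorphic complex moment map $\mu_c=\mu_3+\ii\mu_1$ so that $Y=\mu_c^{-1}(0)$ is a $\sig$-invariant K\"ahler submanifold, apply Lemma~\ref{lem:fd} to $Y$ with the residual $\om_2$-reduction (using $\mu^{-1}(0)=\mu_2^{-1}(0)\cap Y$ to identify $Y\doubleslashz K$ with $X\tripleslash K$), and obtain part~2 by descending $\sig$ to $\bar\sig$ with $\bar\sig^*\bar J_i=(-1)^i\bar J_i$ and $\bar\sig^*\bar\om_i=(-1)^i\bar\om_i$. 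The extra care you take over the reduction-in-stages compatibility of $\bar J_2$ is a sound elaboration of a point the paper treats as immediate, not a different argument.
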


\begin{proof}
1\&3. Let $\mu_c=\mu_3+\ii\mu_1\colon X\to\fk^{*\bC}$.
Then $\mu_c$ is holomorphic with respect to $J_2$ and is equivariant under
the action of $K$.
Since $0$ is a regular value of $\mu_c$, $Y=\mu_c^{-1}(0)$ is a smooth
K\"ahler submanifold in $X$ on which the action of $K$ is Hamiltonian.
Applying Lemma~\ref{lem:fd} to $Y$, we conclude that the action of $K^\tau$
on $Y^\sig$ is Hamiltonian and that $(\mu^{-1}(0))^\sig/K^\tau=
(\mu_2^{-1}(0)\cap Y^\sig)/K^\tau=Y^\sig\doubleslashz K^\tau$.
Moreover, there is a local diffeomorphism from $Y^\sig\doubleslashz K^\tau$
to $(Y\doubleslashz K)^{\bar\sig}=(X\tripleslash K)^{\bar\sig}$ which is
symplectic.
Since $K^\tau$ acts holomorphically on $(Y^\sig,J_2)$, the symplectic quotient
$Y^\sig\doubleslashz K^\tau$ is K\"ahler, and the above local diffeomorphism
is also K\"ahler.

2. Since $\sig$ preserves $\mu^{-1}(0)$, it descends to an involution
$\bar\sig$ on $X\tripleslash K$ such that $\bar\sig^*\bar J_i=(-1)^i\bar J_i$,
$\bar\sig^*\bar\om_i=(-1)^i\bar\om_i$ for $i=1,2,3$.
The result then follows.
\end{proof}

We now prove Theorem~\ref{thm:Hitchin}.

\begin{proof}
1\&3.
Note that $\cA^\fla(\tP^\bC)^\circ$ is a $\tau$-invariant K\"ahler submanifold
in $T\cA(\tP)\cong\cA(\tP^\bC)$.
Following \cite{AB,Hi}, we can apply the method in Lemma~\ref{lem:fd} to
$\cA^\fla(\tP^\bC)^\circ$ on which $\tau$ acts preserving $\upom_J$ and $J$.
Since $\tau$ also acts on $\cG(\tP)$ and $\cG(P)\cong(\cG(\tP))^\tau$,
$\cG(P)/Z(K)$ acts Hamiltonianly and freely on
$\cA^\fla(P^\bC)^\circ\cong(\cA^\fla(\tP^\bC)^\circ)^\tau$, which is K\"ahler
with respect to $J,\upom_J$.
Thus $\cM^\hit(P)^\circ=(\cA^\fla(P^\bC)^\circ\cap\upmu_J^{-1}(0))/\cG(P)
=\cA^\fla(P^\bC)^\circ\doubleslashz\cG(P)$ is a symplectic quotient.
Since the latter is non-empty, there is a local K\"ahler diffeomorphism
$\cM^\hit(P)^\circ\to(\cA^\fla(\tP^\bC)^\circ\doubleslashz\cG(\tP))^\tau
=(\cM^\hit(\tP)^\circ)^\tau$.

\noindent
2. The space $T\cA(\tP)\cong\cA(\tP^\bC)$ with $I,J,K$ is hyper-K\"ahler and
the action of $\cG(\tP)$ is Hamiltonian with respect to
$\upom_I,\upom_J,\upom_K$.
Let $(\upmu^{-1}(0))^\circ$ be the subset of $\upmu^{-1}(0)$ on which
$\cG(\tP)/Z(K)$ acts freely.
Then $\cM^\HK(\tP)^\circ:=(\upmu^{-1}(0))^\circ/\cG(\tP)$ is the smooth part
of the hyper-K\"ahler quotient $\cM^\HK(\tP)$.
The involutions $\tau$ on $\cA(\tP)$ and $\cG(\tP)$ satisfy the conditions of
Proposition~\ref{pro:fd}.
So $(\cM^\HK(\tP)^\circ)^\tau$ is K\"ahler and totally geodesic with respect
to $\bar J$ and $\bar\upom_J$, and totally real and Lagrangian with respect
to $\bar I,\bar K$ and $\bar\upom_I,\bar\upom_K$ in $\cM^\HK(\tP)^\circ$.
If $M$ is a non-orientable surface, then
$\upmu_I^{-1}(0)\cap\upmu_K^{-1}(0)=\cA^\fla(\tP^\bC)$ which implies that
 $\cM^\hit(\tP)^\circ=\cM^\HK(\tP)^\circ$.
In general, $\cM^\hit(\tP)^\circ$ is a $\tau$-invariant hyper-K\"ahler
submanifold in $\cM^\HK(\tP)^\circ$.
The results follow from $(\cM^\hit(\tP)^\circ)^\tau
=\cM^\hit(\tP)\cap(\cM^\HK(\tP)^\circ)^\tau$.
\end{proof}

\section{The representation variety perspective}\label{sec:repr}

\subsection{Representation variety and Betti moduli space}\label{sec:red=red}

Let $\Gam$ be a finitely generated group and let $G$ be a connected complex
Lie group.
Then $G$ acts on $\Hom(\Gam,G)$ by the conjugate action on $G$.
A representation $\phi\in\Hom(\Gam,G)$ is {\em reductive} if the closure of
$\phi(\Gam)$ in $G$ is contained in the Levi subgroup of any parabolic subgroup
containing $\phi(\Gam)$; let $\Hom^\red(\Gam,G)$ be the set of such.
The condition $\phi\in\Hom^\red(\Gam,G)$ is equivalent to the statement that
the $G$-orbit $G\cdot\phi$ is closed \cite{GM}.
It is also equivalent to the condition that the composition of $\phi$ with
the adjoint representation of $G$ is semi-simple
(see \cite[Section 3]{Richardson88} and \cite[Theorem 30]{Sikora12}).
The quotient
\[ \Hom(\Gam,G)\doubleslash G=\Hom^\red(\Gam,G)/G \]
is known as the {\em representation variety} or {\em character variety}.
A reductive representation $\phi\in\Hom^\red(\Gam,G)$ is {\em good} \cite{JM}
if its stabilizer $G_\phi=Z(G)$; let $\Hom^\good(\Gam,G)$ be the set of such.
On the other hand, $\phi\in\Hom(\Gam,G)$ is {\em Ad-irreducible} if its
composition with the adjoint representation of $G$ is an irreducible
representation of $\Gam$.
Let $\Hom^\irr(\Gam,G)$ be the set of such.
Notice that this set is empty unless $G$ is simple.
Clearly, $\Hom^\irr(\Gam,G)\subset\Hom^\good(\Gam,G)$.
In general, $\Hom^\good(\Gam,G)/G$ may not be smooth, but it is so when
$\Gam$ is the fundamental group of a compact orientable surface
\cite[Corollary~50]{Sikora12}.

Suppose $M$ is a compact manifold and $P^\bC\to M$ is a principal $G$-bundle
over $M$.
Choose a base point $x_0\in M$ and let $\Gam=\pi_1(M,x_0)$ be the fundamental group.
Then $\Hom(\Gam,G)\doubleslash G$ is known as the {\em Betti moduli space}
\cite{S92}, denoted by $\cM^\bet(P^\bC)$.
The identification $\cM^\dR(P^\bC)\cong\cM^\bet(P^\bC)$, which we recall
briefly now, is well known.
Given a flat connection, let $T_\al\colon P_{\al(0)}\to P_{\al(1)}$ be the
parallel transport along a path $\al$ in $M$.
Fix a point $p_0\in P_{x_0}$ in the fibre over $x_0$.
For $a\in\pi_1(M,x_0)$, choose a loop $\al$ based at $x_0$ representing $a$,
then $\phi(a)$ is the unique element in $G$ defined by
$T_\al(p_0)=p_0\phi(a)^{-1}$.
If we choose another point in the fibre over $x_0$, then $\phi$ differs by
a conjugation.
Finally, the flat connection is reductive if and only if the corresponding
element in $\Hom(\Gam,G)$ is reductive.
Upon identification of the de Rham moduli space $\cM^\dR(P^\bC)$ and the Betti
moduli spaces $\cM^\bet(P^\bC)=\Hom(\Gam,G)\doubleslash G$, the subset
$\Hom^\good(\Gam,G)/G$ contains the smooth part $\cM^\dR(P^\bC)^\circ$
introduced in subsection~\ref{sec:Hitchin};
they are equal when $M$ is a compact orientable surface.

If $M$ is non-orientable and $\pi\colon\tM\to M$ is the oriented cover,
we choose a base point $\tilde x_0\in\pi^{-1}(x_0)$ and let
$\tGam=\pi_1(\tM,\tilde x_0)$.
Then there is a short exact sequence
\[ 1\to\tGam\to\Gam\to\bZ_2\to1 \]
and $\tGam$ can be identified with an index $2$ subgroup in $\Gam$.
In the rest of this section, we will study the relation of the representation
varieties $\Hom(\Gam,G)\doubleslash G$ and $\Hom(\tGam,G)\doubleslash G$ or
the Betti moduli spaces $\cM^\bet(P^\bC)$ and $\cM^\bet(\tP^\bC)$.
Some of the results, when $M$ is a compact non-orientable surface, appeared in
\cite{Ho}, which used different methods.

We first establish a useful fact that was used in subsection~\ref{sec:DC}.

\begin{lem}\label{lem:reductive}
Suppose $\Gam$ is a finitely generated group and $\tGam$ is an index~$2$
subgroup in $\Gam$.
Let $G$ be a connected, complex reductive Lie group.
Then $\phi\in\Hom(\Gam,G)$ is reductive if and only if the restriction
$\phi|_{\tGam}\in\Hom(\tGam,G)$ is reductive.
\end{lem}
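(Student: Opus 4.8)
The plan is to reduce the statement to a purely representation-theoretic fact by means of the criterion recalled above, namely that $\phi\in\Hom(\Gam,G)$ is reductive if and only if the composition $\Ad\circ\phi\colon\Gam\to\mathrm{GL}(\mathfrak{g})$ is a semisimple representation, where $\mathfrak{g}=\Lie(G)$. Writing $V$ for the finite-dimensional complex vector space $\mathfrak{g}$ equipped with the $\Gam$-action $\Ad\circ\phi$, the reductivity of $\phi$ is equivalent to the semisimplicity of $V$ as a $\bC[\Gam]$-module, and likewise the reductivity of $\phi|_{\tGam}$ is equivalent to the semisimplicity of the restriction $V|_{\tGam}$ as a $\bC[\tGam]$-module. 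Since $\tGam$ has index $2$ in $\Gam$ it is normal, so it suffices to prove that a finite-dimensional complex representation $V$ of $\Gam$ is semisimple if and only if $V|_{\tGam}$ is semisimple.

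For the forward implication I would invoke Clifford's theorem: if $V$ is an irreducible $\Gam$-module then $V|_{\tGam}$ is a direct sum of $\Gam$-conjugates of a single irreducible $\tGam$-module, hence in particular semisimple. As restriction commutes with direct sums and a semisimple $\Gam$-module is by definition a direct sum of irreducibles, it follows immediately that $V|_{\tGam}$ is semisimple whenever $V$ is. This direction uses only the normality of $\tGam$ in $\Gam$.

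For the reverse implication I would run the Maschke-type averaging argument, which is where the index being finite and invertible in $\bC$ enters. Assume $V|_{\tGam}$ is semisimple and let $W\subseteq V$ be any $\Gam$-submodule. By semisimplicity of the restriction there is a $\tGam$-equivariant projection $p\colon V\to V$ with image $W$ and $p|_W=\id$. Fixing $s\in\Gam\setminus\tGam$, the operator $\tilde p=\tfrac12(p+s\,p\,s^{-1})$ is well defined, since the $\tGam$-equivariance of $p$ makes the nontrivial term independent of the choice of coset representative; it is $\Gam$-equivariant because conjugation by any element of $\Gam$ merely permutes the two terms, and it still satisfies $\tilde p|_W=\id$ because $W$ is $\Gam$-invariant. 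Its kernel is then a $\Gam$-invariant complement to $W$, so $V$ is semisimple over $\Gam$.

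The argument is short once the semisimplicity criterion is in hand, so I do not expect a serious obstacle; the only point requiring care is that $\Gam$ and $\tGam$ are in general infinite, so one must use the abstract (non-algebraic) forms of Clifford's theorem and of Maschke's averaging for finite-dimensional complex representations, rather than any statement specific to finite or algebraic groups. The passage from ``semisimple $\Ad$-representation'' back to ``reductive homomorphism'' is exactly the cited equivalence and applies verbatim to both $\phi$ and $\phi|_{\tGam}$, since $G$ is connected complex reductive.
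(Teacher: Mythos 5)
Your proof is correct and follows essentially the same route as the paper: both reduce the statement, via the criterion that $\phi$ is reductive iff $\Ad\circ\phi$ is semisimple, to the fact that a finite-dimensional complex representation of $\Gam$ is semisimple iff its restriction to the index-$2$ subgroup $\tGam$ is. The only difference is that the paper simply cites Clifford's theorem and Bourbaki for this fact, whereas you supply the two standard arguments (Clifford's theorem for one direction, Maschke-type averaging over the two cosets for the other), both of which are carried out correctly.
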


\begin{proof}
Recall that $\phi\in\Hom(\Gam,G)$ is reductive if and only if the composition
$\Ad\circ\phi$ is a semisimple representation on $\mathfrak{g}$.
Similarly, $\phi|_{\tGam}$ is reductive if and only if
$\Ad\circ\phi|_{\tGam}$ is semisimple.
By $\Gam/\tGam\cong\bZ_2$ and \cite{Cl}, \cite[Chap.~3, \S9.8, Lemme~2]{Bou},
$\Ad\circ\phi$ is semisimple if only if $\Ad\circ\phi|_{\tGam}$ is so.
The result then follows.
\end{proof}

\begin{cor}\label{cor:red=red}
Let $G$ be a connected, complex reductive Lie group.
Suppose $P$ is a principal $G$-bundle over a compact non-orientable manifold
$M$ whose oriented cover is $\pi\colon\tM\to M$.
Then a flat connection $A$ on $P$ is reductive if and only if the pull-back
$\pi^*A$ is a flat reductive connection on $\tP:=\pi^*P$.
\end{cor}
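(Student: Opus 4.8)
The plan is to reduce the corollary to Lemma~\ref{lem:reductive} by passing from connections to their holonomy representations. First I would fix a base point $x_0\in M$ together with a lift $\tilde x_0\in\pi^{-1}(x_0)$, and set $\Gam=\pi_1(M,x_0)$, $\tGam=\pi_1(\tM,\tilde x_0)$. Since $M$ is connected and non-orientable, its oriented cover $\tM$ is connected, so $\pi_*$ realizes $\tGam$ as an index~$2$ subgroup of $\Gam$, giving the short exact sequence $1\to\tGam\to\Gam\to\bZ_2\to1$. A flat connection $A$ on $P$ determines (up to conjugation, via a choice of $p_0\in P_{x_0}$) a holonomy representation $\phi\in\Hom(\Gam,G)$ with image the holonomy group $\Hol(A)=\phi(\Gam)$. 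By the definitions recalled earlier, reductivity of $A$ and reductivity of $\phi$ are literally the same condition, since each asserts that $\overline{\phi(\Gam)}$ lies in the Levi subgroup of every parabolic containing it.

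The key step is to identify the holonomy of the pull-back $\pi^*A$ with the restriction $\phi|_{\tGam}$. A loop $\tilde\al$ in $\tM$ based at $\tilde x_0$ projects to the loop $\pi\circ\tilde\al$ representing $\pi_*[\tilde\al]\in\Gam$, and under the canonical identification $\tP_{\tilde x_0}\cong P_{x_0}$ of fibres of the pull-back bundle, parallel transport for $\pi^*A$ along $\tilde\al$ coincides with parallel transport for $A$ along $\pi\circ\tilde\al$. Hence the holonomy representation of $\pi^*A$ is $\phi\circ\pi_*=\phi|_{\tGam}$, and $\Hol(\pi^*A)=\phi(\tGam)$, so $\pi^*A$ is reductive if and only if $\phi|_{\tGam}$ is. Applying Lemma~\ref{lem:reductive} to $(\Gam,\tGam)$ and $\phi$ then gives $\phi$ reductive $\iff\phi|_{\tGam}$ reductive. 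Chaining the equivalences yields $A$ reductive $\iff\phi$ reductive $\iff\phi|_{\tGam}$ reductive $\iff\pi^*A$ reductive, as desired.

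Essentially all of the mathematical content sits in Lemma~\ref{lem:reductive}, which rests on Clifford theory for the index~$2$ extension; the surrounding steps are bookkeeping about holonomy. The only points that need care, and which I expect to be the mildest of obstacles, are the compatibility of parallel transport with the pull-back of connections and the connectedness of the oriented cover — the latter guaranteeing that $\tGam$ is a genuine index~$2$ subgroup rather than the fundamental group of a disconnected cover. I anticipate no serious difficulty beyond recording these standard facts precisely.
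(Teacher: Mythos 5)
Your proposal is correct and is essentially the argument the paper intends: the corollary is stated as an immediate consequence of Lemma~\ref{lem:reductive} via the standard identification of flat connections with their holonomy representations, under which the holonomy of $\pi^*A$ is exactly $\phi|_{\tGam}$ for the index~$2$ subgroup $\tGam\subset\Gam$. The bookkeeping you record (connectedness of $\tM$, compatibility of parallel transport with pull-back) is exactly what the paper leaves implicit.
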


\subsection{Representation varieties associated to an index $2$ subgroup}
\label{sec:pro1&2}

Let $\Gam$ be a finitely generated group and let $\tGam$ be an index~$2$
subgroup in $\Gam$.
Let $G$ be a connected complex Lie group and let $Z(G)$ be its center.
For any $c\in\Gam\setminus\tGam$, we have $\Ad_c|_{\tGam}\in\Aut(\tGam)$,
and the class $[\Ad_c|_{\tGam}]\in\Aut(\tGam)/\Inn(\tGam)$ is independent
of the choice of $c$.
So we have a homomorphism $\bZ_2\cong\{1,\tau\}\to\Aut(\tGam)/\Inn(\tGam)$
given by $\tau\mapsto[\Ad_c|_{\tGam}]$.

\begin{lem}\label{lem:z2}
$\bZ_2\cong\{1,\tau\}$ acts on $\Hom(\tGam,G)\doubleslash G$ and on
$\Hom^\good(\tGam,G)/G$.
\end{lem}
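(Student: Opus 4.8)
The plan is to build the $\bZ_2$-action in two stages and then pull it back along the homomorphism $\bZ_2\cong\{1,\tau\}\to\Aut(\tGam)/\Inn(\tGam)$, $\tau\mapsto[\Ad_c|_{\tGam}]$, constructed just before the statement. First I would let the full automorphism group $\Aut(\tGam)$ act on $\Hom(\tGam,G)$ by precomposition, $\al\cdot\phi:=\phi\circ\al^{-1}$. Since this commutes with the conjugation action of $G$, namely $g\cdot(\phi\circ\al^{-1})=(g\cdot\phi)\circ\al^{-1}$, it descends to an action of $\Aut(\tGam)$ on the quotient $\Hom(\tGam,G)\doubleslash G$.

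The key step is to observe that $\Inn(\tGam)$ acts trivially on this quotient. For $\gam\in\tGam$ and the inner automorphism $\Ad_\gam$ one computes $\phi\circ\Ad_\gam(x)=\phi(\gam x\gam^{-1})=\Ad_{\phi(\gam)}\phi(x)$, so that $\phi\circ\Ad_\gam$ lies in the same $G$-orbit as $\phi$; as $\Inn(\tGam)$ is generated by such automorphisms, the whole of $\Inn(\tGam)$ fixes every orbit. Hence the $\Aut(\tGam)$-action factors through $\Aut(\tGam)/\Inn(\tGam)$. Composing with $\tau\mapsto[\Ad_c|_{\tGam}]$ then produces the $\bZ_2$-action $\tau\cdot[\phi]=[\phi\circ\Ad_c|_{\tGam}]$, independent of the choice of $c\in\Gam\setminus\tGam$ by the remark preceding the lemma. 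That this is a genuine $\bZ_2$-action, i.e.\ that the homomorphism has order dividing $2$, follows because $c^2\in\tGam$, whence $[\Ad_c|_{\tGam}]^2=[\Ad_{c^2}|_{\tGam}]=1$ is inner.

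It remains to check that the action preserves the subsets carving out the two quotients, which I expect to be the only point requiring any care. Both reductivity and goodness depend only on the image subgroup of a representation: $\phi$ is reductive iff the closure of $\phi(\tGam)$ sits in the Levi factor of every parabolic containing it, and $\phi$ is good iff in addition its stabilizer, the centralizer $Z_G(\phi(\tGam))$, equals $Z(G)$. Because $\al$ is an automorphism we have $\al^{-1}(\tGam)=\tGam$, so $\phi\circ\al^{-1}$ has the same image $\phi(\tGam)$ as $\phi$, hence the same closure and the same centralizer. Therefore $\Hom^\red(\tGam,G)$ and $\Hom^\good(\tGam,G)$ are $\Aut(\tGam)$-invariant, and the induced $\bZ_2$-action restricts to $\Hom(\tGam,G)\doubleslash G=\Hom^\red(\tGam,G)/G$ and to $\Hom^\good(\tGam,G)/G$, as claimed. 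The main obstacle is thus nothing more than the triviality of $\Inn(\tGam)$ on the quotient together with the invariance of the good locus; no deeper difficulty is anticipated.
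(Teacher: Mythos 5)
Your proof is correct and follows essentially the same route as the paper: the same key identities ($\phi\circ\Ad_\gam=\Ad_{\phi(\gam)}\circ\phi$ for $\gam\in\tGam$, $c^2\in\tGam$, and invariance of the image under precomposition) drive both arguments, with yours merely packaged more conceptually as the action of $\Aut(\tGam)/\Inn(\tGam)$ on the character variety restricted along $\tau\mapsto[\Ad_c|_{\tGam}]$. Your explicit justification that goodness is preserved (the stabilizer is the centralizer of the image $\phi(\tGam)$, which precomposition does not change) is a welcome elaboration of a step the paper leaves implicit.
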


\begin{proof}
We define $\tau[\phi]=[\phi\circ\Ad_c]$ for any $\phi\in\Hom(\tGam,G)$.
The action is well-defined since if $[\phi']=[\phi]$, i.e.,
$\phi'=\Ad_g\circ\phi$ for some $g\in G$, then
$\phi'\circ\Ad_c=\Ad_g\circ\phi\circ\Ad_c\sim\phi\circ\Ad_c$.
The $\tau$-action is independent of the choice of $c$ because if
$c'\in\Gam\setminus\tGam$ is another element, then $c'c^{-1}\in\tGam$
and $\phi\circ\Ad_{c'}
=\Ad_{\phi(c'c^{-1})}\circ(\phi\circ\Ad_c)\sim\phi\circ\Ad_c$.
We do have a $\bZ_2$-action because
$\tau^2[\phi]=[\phi\circ\Ad_{c^2}]=[\Ad_{\phi(c^2)}\circ\phi]=[\phi]$.
Finally, if $\phi$ is in $\Hom^\red(\tGam,G)$ or $\Hom^\good(\tGam,G)$,
then so is $\phi\circ\Ad_c$.
Thus $\tau$ acts on $\Hom(\tGam,G)\doubleslash G$ and $\Hom^\good(\tGam,G)/G$.
\end{proof}

\begin{pro}\label{pro:pro1}
There exists a continuous map
\begin{equation}\label{eq:def_of_L}
L\colon(\Hom^\good(\tGam,G)/G)^\tau\to Z(G)/2Z(G).
\end{equation}
So $(\Hom^\good(\tGam,G)/G)^\tau=\bigcup_{r\in Z(G)/2Z(G)}\cN^\good_r$,
where $\cN^\good_r:=L^{-1}(r)$.
\end{pro}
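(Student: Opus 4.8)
The plan is to attach to each $\tau$-fixed class a single central element, well defined modulo squares, that obstructs ``squaring'' an intertwiner. Fix $c\in\Gam\setminus\tGam$, so that $c^2\in\tGam$ and $\Ad_c$ induces $\tau$. By definition a class lies in $(\Hom^\good(\tGam,G)/G)^\tau$ exactly when $[\phi\circ\Ad_c]=[\phi]$, i.e. there is $g\in G$ with
\[ \phi(c\gam c^{-1})=g\,\phi(\gam)\,g^{-1}\qquad\text{for all }\gam\in\tGam. \]
Because $\phi$ is good its stabiliser is $Z(G)$, so $g$ is unique up to right multiplication by $Z(G)$. The key observation comes from iterating this relation: replacing $\gam$ by $c\gam c^{-1}$ gives $g^2\phi(\gam)g^{-2}=\phi(c^2\gam c^{-2})=\phi(c^2)\phi(\gam)\phi(c^2)^{-1}$, using $c^2\in\tGam$, so $\phi(c^2)^{-1}g^2$ centralises $\phi(\tGam)$. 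Since $\phi$ is good this centraliser equals $Z(G)$, and therefore $z(\phi,g):=g^2\phi(c^2)^{-1}\in Z(G)$. I would then define $L([\phi])$ to be the class of $z(\phi,g)$ in $Z(G)/2Z(G)$.

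Next I would check that $L$ is independent of all choices, each step a short computation using only that $z(\phi,g)$ is central. Replacing $g$ by $gz_0$ with $z_0\in Z(G)$ multiplies $z(\phi,g)$ by $z_0^2\in 2Z(G)$. Replacing $\phi$ by $\Ad_h\phi$ forces $g\mapsto hgh^{-1}$, and conjugation by $h$ fixes the central element $z(\phi,g)$. Finally, replacing $c$ by $c\gam_0$ with $\gam_0\in\tGam$ forces $g\mapsto g\,\phi(\gam_0)$ and $c^2\mapsto c^2\,(c^{-1}\gam_0 c)\,\gam_0$, and a direct cancellation shows $z$ is literally unchanged. Hence $L$ descends to a well-defined map on $(\Hom^\good(\tGam,G)/G)^\tau$.

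The main obstacle is continuity, because in general neither a representative $\phi$ of a class nor an intertwiner $g$ can be chosen continuously over the whole fixed-point set. I would handle this through the correspondence space
\[ \widehat\cN=\{(\phi,g)\in\Hom^\good(\tGam,G)\times G:\phi(c\gam c^{-1})=g\phi(\gam)g^{-1}\ \forall\gam\in\tGam\}, \]
which is closed, the defining relations being imposed only on the finitely many Schreier generators of $\tGam$ (finitely generated as a finite-index subgroup of $\Gam$). On $\widehat\cN$ the assignment $(\phi,g)\mapsto z(\phi,g)$ is continuous into $Z(G)$, and composing with $Z(G)\to Z(G)/2Z(G)$ gives a continuous $\ell\colon\widehat\cN\to Z(G)/2Z(G)$. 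By the checks above, $\ell$ is invariant under the action $(h,z_0)\cdot(\phi,g)=(\Ad_h\phi,\,hgh^{-1}z_0)$ of $G\times Z(G)$, whose orbits are precisely the fibres of $p\colon\widehat\cN\to(\Hom^\good(\tGam,G)/G)^\tau$, $(\phi,g)\mapsto[\phi]$. Hence $\ell$ descends to $L$, and $L$ is continuous once $p$ is known to be a quotient map. The crux is the openness of $p$: writing it as the composite of the forgetful map $\widehat\cN\to\{\phi\in\Hom^\good(\tGam,G):[\phi]\ \tau\text{-fixed}\}$ with the projection to the moduli space (which is open, being the restriction of an open quotient to a saturated set), it suffices to show the forgetful map is open. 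This map is a $Z(G)$-torsor, and I would produce local sections near a good $\phi_0$ by solving the intertwining equations for $g$ via the implicit function theorem—the good condition ensures the linearisation in $g$ has image the full tangent to the $G$-orbit with kernel $\Lie Z(G)$—together with a local slice for the $G$-action. Establishing these local sections is the part requiring genuine care.

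Finally, $Z(G)/2Z(G)$ is discrete, since squaring is surjective on the identity component of the abelian group $Z(G)$ and the quotient is therefore a quotient of the component group $\pi_0(Z(G))$. Hence the continuous map $L$ is locally constant, each $\cN^\good_r:=L^{-1}(r)$ is open and closed in the fixed-point set, and $(\Hom^\good(\tGam,G)/G)^\tau=\bigcup_{r\in Z(G)/2Z(G)}\cN^\good_r$ is the asserted decomposition.
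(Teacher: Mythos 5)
Your proposal is correct and follows essentially the same route as the paper: you define the same invariant $z=g^2\phi(c^2)^{-1}$, derive its centrality from goodness in the same way, and perform the same well-definedness checks (change of $g$ by $Z(G)$, change of representative $\phi$, change of $c$). The only difference is in packaging the continuity step — the paper simply notes that goodness makes $[g]\in G/Z(G)$ depend continuously on $\phi$, whereas you route this through a correspondence space and an openness argument, which is a more elaborate (and admittedly still sketched) version of the same point.
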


\begin{proof}
If $\tau[\phi]=[\phi]$, then there exists $g\in G$ such that
$\phi\circ\Ad_c=\Ad_g\circ\phi$.
Since $c^2\in\tGam$, we have
$\Ad_{g^2}\circ\phi=\phi\circ\Ad_{c^2}=\Ad_{\phi(c^2)}\circ\phi$.
Thus $z:=g^2\phi(c^2)^{-1}\in G_\phi=Z(G)$.
If $[\phi']=[\phi]$, i.e., $\phi'=\Ad_h\circ\phi$ for some $h\in G$,
then $\phi'\circ\Ad_c=\Ad_{g'}\circ\phi'$ for $g'=\Ad_hg$.
Since $g'^2=\Ad_hg^2=z\Ad_h\phi(c^2)=z\phi'(c^2)$, we obtain
$(g')^2\phi'(c^2)^{-1}=z$.

If $\phi\circ\Ad_{c'}=\Ad_{g'}\circ\phi$ holds for different choices of
$c'\in\Gam\setminus\tGam$ and $g'\in G$, then
$z'=(g')^2\phi(c'^2)^{-1}\in Z(G)$ from the above discussion.
On the other hand, we have
$\Ad_{g^{-1}g'}\circ\phi=\Ad_{\phi(c^{-1}c')}\circ\phi$ as $c^{-1}c'\in\tGam$.
This gives us $t:=(g')^{-1}g\phi(c^{-1}c')\in G_\phi=Z(G)$.
We get
\begin{eqnarray*}\lefteqn{t^2(g')^2=(tg')^2=g\phi(c^{-1}c')g\phi(c^{-1}c')
=\Ad_g\phi(c^{-1}c')g^2\phi(c^{-1}c')}\\&& \quad
=\phi(\Ad_c(c^{-1}c'))z\phi(c^2)\phi(c^{-1}c')=\phi((c')^2)z,\end{eqnarray*}
i.e., $z'z^{-1}=t^{-2}\in2Z(G)$.
So the map $L\colon[\phi]\mapsto[z]\in Z(G)/2Z(G)$ is well-defined.

Since $\phi\in\Hom^\good(\tGam,G)$, the element $[g]\in G/Z(G)$ is uniquely
determined by and depends continuously on $\phi$.
Therefore $[z]\in Z(G)/2Z(G)$ depends continuously on
$[\phi]\in(\Hom^\good(\tGam,G)/G)^\tau$.
\end{proof}

If $\phi\in\Hom(\Gam,G)$ satisfies $\phi|_{\tGam}\in\Hom^\good(\tGam,G)$,
then $\phi\in\Hom^\good(\Gam,G)$.
However, $\phi\in\Hom^\good(\Gam,G)$ does not imply
$\phi|_{\tGam}\in\Hom^\good(\tGam,G)$.
Let
\[  \Hom^\good_\tau(\Gam,G)
=\{\phi\in\Hom(\Gam,G):\phi|_{\tGam}\in\Hom^\good(\tGam,G)\}. \]
We show that if $[\phi]\in(\Hom^\good(\tGam,G)/G)^\tau$, then $L([\phi])$
is the obstruction of extending $\phi$ to a representation of $\Gam$.

\begin{lem}\label{lem:pro1}
The restriction $R\colon[\phi]\mapsto[\phi|_{\tGam}]$ maps
$\Hom^\good_\tau(\Gam,G)/G$ surjectively to $\cN^\good_0$.
\end{lem}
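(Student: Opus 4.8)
The plan is to verify first that $R$ is well-defined and lands in $\cN^\good_0$, and then to construct a preimage for every class in $\cN^\good_0$; the second step is where the hypothesis $L=0$ is used essentially. First I would check that $R$ descends to the quotients: if $\phi'=\Ad_h\circ\phi$ then $\phi'|_{\tGam}=\Ad_h\circ(\phi|_{\tGam})$, so $[\phi'|_{\tGam}]=[\phi|_{\tGam}]$. To see that the image lies in $\cN^\good_0$, take $\phi\in\Hom^\good_\tau(\Gam,G)$ and fix $c\in\Gam\setminus\tGam$. For $\gamma\in\tGam$ one has $(\phi|_{\tGam})(\Ad_c\gamma)=\phi(c\gamma c^{-1})=\Ad_{\phi(c)}(\phi(\gamma))$, so the intertwining relation $\phi|_{\tGam}\circ\Ad_c=\Ad_{\phi(c)}\circ\phi|_{\tGam}$ holds with the explicit intertwiner $g=\phi(c)$. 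This shows $\tau[\phi|_{\tGam}]=[\phi|_{\tGam}]$, and feeding $g=\phi(c)$ into the definition of $L$ from Proposition~\ref{pro:pro1} gives $z=g^2\phi(c^2)^{-1}=\phi(c)^2\phi(c)^{-2}=e$, since $c^2\in\tGam$ forces $\phi(c^2)=\phi(c)^2$. Hence $L([\phi|_{\tGam}])=0$ and $R$ maps into $\cN^\good_0$.

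For surjectivity I would start from a class $[\psi]\in\cN^\good_0$ and reconstruct an extension. Since $[\psi]$ is $\tau$-fixed there is $g\in G$ with $\psi\circ\Ad_c=\Ad_g\circ\psi$, and since $L([\psi])=0$ the element $z=g^2\psi(c^2)^{-1}\in Z(G)$ in fact lies in $2Z(G)$, so $z=w^2$ for some $w\in Z(G)$. The key observation is that because $w$ is central, replacing $g$ by $g'=gw^{-1}$ preserves the intertwining relation $\psi\circ\Ad_c=\Ad_{g'}\circ\psi$ while improving the square to $g'^2=g^2w^{-2}=z\psi(c^2)w^{-2}=\psi(c^2)$. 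I would then define $\phi\colon\Gam\to G$ by $\phi|_{\tGam}=\psi$ and $\phi(c)=g'$ (so $\phi(c\gamma)=g'\psi(\gamma)$ on the nontrivial coset). Since $\phi|_{\tGam}=\psi\in\Hom^\good(\tGam,G)$, once $\phi$ is seen to be a homomorphism it automatically lies in $\Hom^\good_\tau(\Gam,G)$ and satisfies $R([\phi])=[\psi]$.

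The main obstacle will be to confirm that this $\phi$ really is a homomorphism; this is the point at which the two identities $\Ad_{g'}\circ\psi=\psi\circ\Ad_c$ and $g'^2=\psi(c^2)$ must be seen to encode exactly the defining data of the extension $1\to\tGam\to\Gam\to\bZ_2\to1$. Concretely, writing every element of $\Gam$ uniquely as $\gamma$ or $c\gamma$ with $\gamma\in\tGam$, the only nonobvious instance of multiplicativity is $(c\gamma_1)(c\gamma_2)=c^2\,(\Ad_{c^{-1}}\gamma_1)\,\gamma_2$, and checking $\phi$ on it reduces, after using $\Ad_{g'^{-1}}\circ\psi=\psi\circ\Ad_{c^{-1}}$, to the identity $g'\psi(\gamma_1)g'=g'^2\,\psi(\Ad_{c^{-1}}\gamma_1)=\psi(c^2)\,\psi(\Ad_{c^{-1}}\gamma_1)$, which is precisely $\phi$ evaluated on the right-hand side. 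Thus the real content of the lemma is the interpretation, already announced in the text, that $L([\psi])$ is exactly the obstruction to extending $\psi$ across $\Gam$: the obstruction is a class in $Z(G)/2Z(G)$ because the intertwiner $g$ is unique only up to $Z(G)$ (by goodness of $\psi$) and adjusting $g$ by $w\in Z(G)$ changes $z$ by $w^2$, so the extension exists precisely when $z$ is a square, i.e.\ when $L([\psi])=0$.
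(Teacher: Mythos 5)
Your proof is correct and follows essentially the same route as the paper: containment of the image in $\cN^\good_0$ via the intertwiner $g=\phi(c)$, and surjectivity by extracting a central square root $w$ of $z$ and extending $\psi$ by $\phi(c)=gw^{-1}$ (the paper's $gt^{-1}$). The only difference is that you spell out the verification that the extension is a homomorphism, which the paper leaves implicit in the phrase ``uniquely determined by the requirements''; that check is a worthwhile addition but not a new idea.
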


\begin{proof}
First, the image $\im(R)\subset\cN^\good_0$ because for any
$\phi\in\Hom^\good_\tau(\Gam,G)$, $\phi|_{\tGam}\in\Hom^\good(\tGam,G)$ by definition,
so $(\phi|_{\tGam})\circ\Ad_c=\Ad_{\phi(c)}\circ\phi|_{\tGam}\sim\phi|_{\tGam}$
and $L([\phi|_{\tGam}])=[\phi(c)^2\phi(c^2)^{-1}]=0$.
We will show that in fact $\im(R)=\cN^\good_0$.
Let $\phi_0\in\Hom^\good(\tGam,G)$ such that $\tau[\phi_0]=[\phi_0]$ and
$L([\phi_0])=0$.
Then there exist $g\in G$ and $t\in Z(G)$ such that
$\phi_0\circ\Ad_c=\Ad_g\circ\phi_0$ and $g^2\phi(c^2)^{-1}=t^2$.
We can extend $\phi_0$ to $\phi\in\Hom(\Gam,G)$ which is uniquely determined
by the requirements $\phi|_{\tGam}=\phi_0$ and $\phi(c)=gt^{-1}$.
Since $\phi_0\in\Hom^\good(\tGam,G)$, $\phi\in\Hom_\tau^\good(\Gam,G)$ and
therefore $[\phi_0]\in\im(R)$.
\end{proof}

\begin{pro}\label{pro:pro2}
$R\colon\Hom^\good_\tau(\Gam,G)/G\to\cN^\good_0$ is a Galois covering map
whose structure group is $\{s\in Z(G):s^2=e\}$.
\end{pro}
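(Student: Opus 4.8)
The plan is to exhibit a free action of the discrete group $T_2:=\{s\in Z(G):s^2=e\}$ (the $2$-torsion of $Z(G)$, discrete because the squaring map on the abelian Lie group $Z(G)$ has differential $2\,\id$ at the identity, hence is a local diffeomorphism) on $\Hom^\good_\tau(\Gam,G)/G$ that is fibre-preserving and simply transitive on the fibres of $R$, and then to upgrade this into a covering-space statement. First I would define the action intrinsically, independently of the choice of $c\in\Gam\setminus\tGam$: each $s\in T_2$ determines a homomorphism $\chi_s\colon\Gam\to Z(G)$ through $\Gam\to\Gam/\tGam\cong\bZ_2\to Z(G)$ sending the nontrivial class to $s$ (well defined because $s^2=e$), and I set $s\cdot[\phi]=[\chi_s\cdot\phi]$. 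Since $\chi_s$ is central, $\chi_s\cdot\phi$ is again a homomorphism, it restricts to $\phi|_{\tGam}$ on $\tGam$ (so it lies in $\Hom^\good_\tau(\Gam,G)$ and $R$ is preserved), and conjugation commutes with multiplication by the central $\chi_s$, so the action descends to the $G$-quotient; that $s\mapsto\chi_s$ is a homomorphism gives a genuine $T_2$-action.

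Next I would prove that $T_2$ acts simply transitively on each nonempty fibre of $R$; surjectivity of $R$ onto $\cN^\good_0$ is already Lemma~\ref{lem:pro1}. For transitivity, given $[\phi],[\psi]$ with $R[\phi]=R[\psi]$, I conjugate $\psi$ so that $\psi|_{\tGam}=\phi|_{\tGam}=:\phi_0$; then $g:=\phi(c)$ and $g':=\psi(c)$ both satisfy $\Ad_g\circ\phi_0=\phi_0\circ\Ad_c=\Ad_{g'}\circ\phi_0$, so $g^{-1}g'\in G_{\phi_0}=Z(G)$ by goodness, say $g'=gs$; and both satisfy $g^2=\phi_0(c^2)=g'^2=g^2s^2$, forcing $s\in T_2$, whence $\psi=\chi_s\cdot\phi$ and $[\psi]=s\cdot[\phi]$. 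For freeness, if $s\cdot[\phi]=[\phi]$ then $\chi_s\cdot\phi=\Ad_h\circ\phi$ for some $h$; restricting to $\tGam$ gives $\Ad_h\circ\phi_0=\phi_0$, so $h\in Z(G)$, and evaluating at $c$ gives $s\,\phi(c)=\Ad_h(\phi(c))=\phi(c)$, i.e. $s=e$. Thus $R$ factors through a bijection $(\Hom^\good_\tau(\Gam,G)/G)/T_2\to\cN^\good_0$.

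To make $R$ a covering map I would work one level up, on the parameter space $E$ of pairs $(\phi_0,g)$ with $\phi_0\in\Hom^\good(\tGam,G)$, $\Ad_g\circ\phi_0=\phi_0\circ\Ad_c$ and $g^2=\phi_0(c^2)$, which is identified with $\Hom^\good_\tau(\Gam,G)$ by $(\phi_0,g)\mapsto\phi$, $\phi|_{\tGam}=\phi_0$, $\phi(c)=g$. The projection $E\to W$ onto the first factor (with $W$ its image) is a principal $T_2$-bundle: the fibres are $T_2$-torsors by the computation above, and local sections exist because goodness makes $[g]\in G/Z(G)$ continuous in $\phi_0$ (as in Proposition~\ref{pro:pro1}), so one lifts $[g]$ locally to $g\in G$ through the covering $G\to G/Z(G)$ and corrects by a local square root of $g^2\phi_0(c^2)^{-1}\in 2Z(G)$ (which exists since squaring is a covering onto $2Z(G)$) to enforce $g^2=\phi_0(c^2)$. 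Because the conjugation $G$-action commutes with right multiplication by the central $T_2$ and is free and proper on the good locus (the stabiliser being $Z(G)$, so that $E/G=\Hom^\good_\tau(\Gam,G)/G$ and $W/G=\cN^\good_0$ are the moduli spaces under discussion), this principal $T_2$-bundle descends to a principal $T_2$-bundle $R\colon E/G\to W/G$, i.e. a regular (Galois) covering with deck group $T_2$. I expect the descent through the $G$-quotient to be the main obstacle: one must check that the free proper action of $G$ (equivalently $G/Z(G)$) interacts well enough with the local trivialisations that evenly covered neighbourhoods upstairs descend to evenly covered neighbourhoods of $\cN^\good_0$. This is exactly where the representation-theoretic input—that good representations have closed orbits and stabiliser $Z(G)$, making the relevant quotients manifolds—must be invoked, rather than from formal considerations alone.
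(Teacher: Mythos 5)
Your proposal is correct and follows essentially the same route as the paper: you define the same action of $\{s\in Z(G):s^2=e\}$ (the paper writes it directly on $\Gam\setminus\tGam$ rather than via the character $\chi_s$, but it is the identical twist), and your freeness and fibre-transitivity arguments use goodness ($G_{\phi_0}=Z(G)$) in exactly the way the paper's proof does. Your additional discussion of local triviality of $R$ via the parameter space of pairs $(\phi_0,g)$ is a reasonable elaboration of a point the paper leaves implicit (it only records the continuity of $[g]\in G/Z(G)$ in the proof of Proposition~\ref{pro:pro1}), but it is not a different method.
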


\begin{proof}
We define an action of $\{s\in Z(G):s^2=e\}$ on $\Hom^\good_\tau(\Gam,G)$.
For any such $s$ and $\phi\in\Hom^\good_\tau(\Gam,G)$, we define $s\cdot\phi$
by $(s\cdot\phi)|_{\tGam}=\phi|_{\tGam}$ and
$(s\cdot\phi)|_{\Gam\setminus\tGam}=s(\phi|_{\Gam\setminus\tGam})$
the group multiplication.
It is clear that $s\cdot\phi\in\Hom(\Gam,G)$.
Moreover, since
$(s\cdot\phi)|_{\tGam}=\phi|_{\tGam}\in\Hom^\good(\tGam,G)$,
$s\cdot\phi\in\Hom^\good_\tau(\Gam,G)$. \\
Clearly, the action descends to a well-defined action on
$\Hom^\good_\tau(\Gam,G)/G$ by $s\cdot[\phi]=[s\cdot\phi]$
preserving the fibres of $R$.

We show that this action is free.
Suppose $s\cdot[\phi]=[\phi]$, then $s\cdot\phi=\Ad_h\circ\phi$ for some
$h\in G$.
Since $\phi|_{\tGam}=(s\cdot\phi)|_{\tGam}=
\Ad_h\circ(\phi|_{\tGam})\in\Hom^\good(\tGam,G)$,
we get $h\in Z(G)$ and hence $s\cdot\phi=\phi$.
Then $s\phi(c)=\phi(c)$ implies $s=e$.

It remains to show that the action is transitive on each fibre of $R$.
Let $[\phi],[\phi']\in\Hom^\good_\tau(\Gam,G)$ such that $R([\phi])=R([\phi'])$.
Then there exists an $h\in G$ such that
$\phi'|_{\tGam}=\Ad_h\circ(\phi|_{\tGam})$.
Thus
\begin{eqnarray*}
\Ad_{\phi'(c)h}\circ(\phi|_{\tGam})
&=&\Ad_{\phi'(c)}\circ(\phi'|_{\tGam})=(\phi'|_{\tGam})\circ\Ad_c\\
&=&\Ad_h\circ(\phi|_{\tGam})\circ\Ad_c=\Ad_{h\phi(c)}\circ(\phi|_{\tGam}).
\end{eqnarray*}
Hence $s:=\phi(c)^{-1}h^{-1}\phi'(c)h\in Z(G)$ since
$\phi|_{\tGam}\in\Hom^\good(\tGam,G)$.
Furthermore
\[ s^2=\phi(c)^{-1}sh^{-1}\phi'(c)h=\phi(c^{-2})h^{-1}\phi'(c^2)h
=\phi(c^{-2})\phi(c^2)=e. \]
Since we have $(s\cdot\phi)|_{\tGam}=\phi|_{\tGam}=\Ad_{h^{-1}}\circ(\phi'|_{\tGam})$
and $(s\cdot\phi)(c)=s\phi(c)=\phi(c)s=(\Ad_{h^{-1}}\circ\phi')(c)$, we get
$s\cdot\phi=\Ad_{h^{-1}}\circ\phi'$, or $[\phi']=[s\cdot\phi]$.
\end{proof}

\begin{cor}\label{cor:localdiff}
Under the above assumptions, there is a local homeomorphism from
$\Hom^\good_\tau(\Gam,G)/G$ to\\
 $(\Hom^\good(\tGam,G)/G)^\tau$, which restricts
to a local diffeomorphism on the smooth part.
If $|Z(G)|$ is odd, this local homeomorphism (diffeomorphism, respectively)
is a homeomorphism (diffeomorphism, respectively).
\end{cor}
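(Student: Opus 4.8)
The plan is to obtain the corollary by simply assembling Lemma~\ref{lem:pro1} and Propositions~\ref{pro:pro1}--\ref{pro:pro2}: the restriction map $R$ is already known to be a Galois covering onto $\cN^\good_0$, and $\cN^\good_0$ sits inside the target $(\Hom^\good(\tGam,G)/G)^\tau$ as one open-and-closed piece of the decomposition from Proposition~\ref{pro:pro1}. So the desired map is just $R$ followed by this inclusion.

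First I would check that $\cN^\good_0$ is clopen. Since the map $L$ in \eqref{eq:def_of_L} is continuous and $Z(G)/2Z(G)$ is discrete (indeed finite, as $Z(G)$ is finite for semisimple $G$), every fibre $\cN^\good_r=L^{-1}(r)$ is both open and closed in $(\Hom^\good(\tGam,G)/G)^\tau$. In particular $\cN^\good_0$ is open, so the inclusion $\cN^\good_0\hookrightarrow(\Hom^\good(\tGam,G)/G)^\tau$ is an open embedding, hence a local homeomorphism onto its open image.

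Next, by Proposition~\ref{pro:pro2}, $R\colon\Hom^\good_\tau(\Gam,G)/G\to\cN^\good_0$ is a Galois covering map, and covering maps are local homeomorphisms. Composing $R$ with the open inclusion above yields the local homeomorphism $\Hom^\good_\tau(\Gam,G)/G\to(\Hom^\good(\tGam,G)/G)^\tau$ asserted in the corollary. On the smooth loci the restriction map is smooth and $R$ is a covering of manifolds, hence a local diffeomorphism; composing with the smooth open embedding of the clopen submanifold $\cN^\good_0$ gives a local diffeomorphism, which proves the smooth-part statement. For the case $|Z(G)|$ odd, multiplication by $2$ is an automorphism of the finite abelian group $Z(G)$, so $2Z(G)=Z(G)$ and $Z(G)/2Z(G)$ is trivial; thus $(\Hom^\good(\tGam,G)/G)^\tau=\cN^\good_0$ and the inclusion is the identity. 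Moreover the covering's structure group $\{s\in Z(G):s^2=e\}$ is trivial (the $2$-torsion of an odd-order group vanishes), so $R$ is single-sheeted, i.e. a homeomorphism; on smooth parts it is then a diffeomorphism.

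I expect no genuine obstacle here, since all the substance is already packaged in the two preceding propositions. The only points demanding care are the clopenness of $\cN^\good_0$ (needed so that the composite lands in an \emph{open} subset of the target, as required for a local homeomorphism) together with the two elementary odd-order computations that force $Z(G)/2Z(G)$ and the covering's structure group to be trivial.
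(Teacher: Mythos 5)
Your proposal is correct and follows essentially the same route as the paper: the paper's proof simply cites Propositions~\ref{pro:pro1} and \ref{pro:pro2} for the first statement, and your filling-in (that $\cN^\good_0=L^{-1}(0)$ is clopen since $L$ is continuous with discrete target, and that the Galois covering $R$ is a local homeomorphism) is exactly the intended argument. The odd-$|Z(G)|$ case is handled identically in both: triviality of $Z(G)/2Z(G)$ gives $(\Hom^\good(\tGam,G)/G)^\tau=\cN^\good_0$, and triviality of the $2$-torsion $\{s\in Z(G):s^2=e\}$ makes the covering a bijection.
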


\begin{proof}
The first statement follows easily from Propositions~\ref{pro:pro1} and
\ref{pro:pro2}.
If $|Z(G)|$ is odd, we get $Z(G)/2Z(G)\cong\{0\}$ and
$(\Hom^\good(\tGam,G)/G)^\tau=\cN^\good_0$ by Proposition~\ref{pro:pro1}.
Furthermore, since $\{s\in Z(G):s^2=e\}=\{e\}$, the covering map in
Proposition~\ref{pro:pro2} is a bijection.
\end{proof}

The involution $\tau$ also acts on $\Hom^\irr(\tGam,G)/G$.
Let
\[ \Hom^\irr_\tau(\Gam,G)
=\{\phi\in\Hom(\Gam,G):\phi|_{\tGam}\in\Hom^\irr(\tGam,G)\}. \]
By the same idea used in the proof of Propositions~\ref{pro:pro1} and
\ref{pro:pro2}, we get

\begin{cor}
If $G$ is simple, there exists a decomposition
\[ (\Hom^\irr(\tGam,G)/G)^\tau
   =\bigcup_{r\in Z(G)/2Z(G)} \cN^\irr_r,  \]
where $\cN^\irr_r=\cN^\good_r\cap(\Hom^\irr(\tGam,G)/G)^\tau$.
Furthermore, there exists a Galois covering map
$R\colon\Hom^\irr_\tau(\Gam,G)/G\to\cN^\irr_0$ with structure group
$\{s\in Z(G):s^2=e\}$.
If $|Z(G)|$ is odd, then there is a bijection from $\Hom^\irr_\tau(\Gam,G)/G$
to $(\Hom^\irr(\tGam,G)/G)^\tau$.
\end{cor}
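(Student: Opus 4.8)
The plan is to show that every construction underlying Propositions~\ref{pro:pro1} and~\ref{pro:pro2} restricts verbatim to the irreducible locus, exploiting the inclusion $\Hom^\irr(\tGam,G)\subset\Hom^\good(\tGam,G)$ (which is nonempty only because $G$ is assumed simple).

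First I would verify that $\tau$ preserves $\Hom^\irr(\tGam,G)/G$. Since $\tau[\phi]=[\phi\circ\Ad_c]$ with $\Ad_c$ an automorphism of $\tGam$, the representation $\Ad\circ(\phi\circ\Ad_c)=(\Ad\circ\phi)\circ\Ad_c$ has precisely the same invariant subspaces as $\Ad\circ\phi$; hence $\phi\circ\Ad_c$ is Ad-irreducible whenever $\phi$ is. Consequently the map $L$ of Proposition~\ref{pro:pro1} restricts to $(\Hom^\irr(\tGam,G)/G)^\tau$, and putting $\cN^\irr_r=L^{-1}(r)\cap(\Hom^\irr(\tGam,G)/G)^\tau=\cN^\good_r\cap(\Hom^\irr(\tGam,G)/G)^\tau$ realizes $(\Hom^\irr(\tGam,G)/G)^\tau$ as the disjoint union of the fibres of this restricted $L$, giving the decomposition in the first assertion.

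For the covering map, the identity I would establish is $\Hom^\irr_\tau(\Gam,G)/G=R^{-1}(\cN^\irr_0)$, where $R$ is the Galois covering of Proposition~\ref{pro:pro2}. Indeed $\phi\in\Hom^\irr_\tau(\Gam,G)$ precisely when $\phi|_{\tGam}\in\Hom^\irr(\tGam,G)$, i.e. when $R([\phi])=[\phi|_{\tGam}]$ lies in $\cN^\good_0\cap(\Hom^\irr(\tGam,G)/G)^\tau=\cN^\irr_0$; surjectivity of $R$ onto $\cN^\irr_0$ then follows from Lemma~\ref{lem:pro1}, since a preimage $\phi$ of an irreducible $\phi_0\in\cN^\irr_0$ automatically lies in $\Hom^\irr_\tau(\Gam,G)$. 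Next I would check that the structure group $\{s\in Z(G):s^2=e\}$ preserves $\Hom^\irr_\tau(\Gam,G)$: by construction $(s\cdot\phi)|_{\tGam}=\phi|_{\tGam}$, so the action does not change the restriction to $\tGam$ and in particular preserves its Ad-irreducibility. Since the restriction of a Galois covering to the preimage of a subset of the base, under a structure group preserving that preimage, is again a Galois covering with the same group, this produces $R\colon\Hom^\irr_\tau(\Gam,G)/G\to\cN^\irr_0$ as a Galois covering with structure group $\{s\in Z(G):s^2=e\}$.

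Finally, the odd-center case is a purely group-theoretic remark, exactly as in Corollary~\ref{cor:localdiff}: when $|Z(G)|$ is odd, squaring is a bijection of the finite abelian group $Z(G)$, so $Z(G)/2Z(G)=\{0\}$ and $\{s\in Z(G):s^2=e\}=\{e\}$. The former collapses the decomposition to $(\Hom^\irr(\tGam,G)/G)^\tau=\cN^\irr_0$, and the latter reduces $R$ to a covering with trivial deck group, hence a bijection onto $\cN^\irr_0$. I expect no genuine obstacle here, since the statement is a corollary; the only points needing care are the two stability checks for the irreducibility condition---under precomposition by $\Ad_c$ and under the structure-group action---each of which reduces to the fact that these operations leave $\phi|_{\tGam}$ with an identical or isomorphic Ad-representation.
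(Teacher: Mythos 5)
Your proposal is correct and follows essentially the same route as the paper, which simply derives the corollary from Propositions~\ref{pro:pro1} and \ref{pro:pro2}; you have merely made explicit the two verifications the paper leaves implicit, namely that precomposition by $\Ad_c$ and the $\{s\in Z(G):s^2=e\}$-action both preserve Ad-irreducibility of the restriction to $\tGam$. Both checks are carried out correctly, so nothing further is needed.
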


The results in this subsection show parts~(1) and (2) of
Theorem~\ref{thm:cover}.

\subsection{The Betti moduli space associated to a non-orientable surface}
\label{sec:pro3}

By subsection~\ref{sec:pro1&2} or parts (1) and (2) of
Theorem~\ref{thm:cover}, we know that a representation
$\phi\in\Hom^\good(\tGam,G)$ such that $\tau[\phi]=[\phi]$ can
be extended to one on $\Gam$ if an only if $L([\phi])=0$.
When applied to $\Gam=\pi_1(M)$ and $\tGam=\pi_1(\tM)$, where $M$ is
non-orientable and $\tM$ is its oriented cover, we conclude that a
$\tau$-invariant flat bundle over the $\tM$ corresponding to
$\phi\in\Hom^\good(\tGam,G)$ is the pull-back of a flat bundle over
$M$ if and only if $L([\phi])=0$.
We now consider the example when $M=\Sig$ is a compact non-orientable surface,
in which case we can characterize all the components $\cN^\good_r$ explicitly.
The principal $G$-bundles on $\Sig$ are topologically classified by
$H^2(\Sig,\pi_1(G))\cong\pi_1(G)/2\pi_1(G)$ whereas those on the oriented
cover $\tSig$ are classified by $H^2(\tSig,\pi_1(G))\cong\pi_1(G)$.
The classes in these groups are the obstructions of lifting the structure
group $G$ of the bundles to its universal cover group.

A compact non-orientable surface $\Sig$ is of the form $\Sig^\ell_k$
($\ell\ge0$, $k=1,2$), the connected sum of $2\ell+k$ copies of $\bR P^2$.
Then $\tSig$ is a compact surface of genus $2\ell+k-1$.
For $k=1$, we have
\[ \pi_1(\Sig)=\big\bra\,a_i,b_i\,(1\le i\le \ell),c:
c^{-2}\textstyle\prod_{i=1}^\ell[a_i,b_i]\,\big\ket,\]
\[ \pi_1(\tSig)=\big\bra\,a_i,b_i,a'_i,b'_i\,(1\le i\le \ell):
\textstyle\prod_{i=1}^\ell[a_i,b_i]\prod_{i=1}^\ell[a'_i,b'_i]\,\big\ket. \]
The inclusion $\pi_1(\tSig)\to\pi_1(\Sig)$ is given by $a_i\mapsto a_i$,
$b_i\mapsto b_i$, $a'_i\mapsto\Ad_cb_i$, $b'_i\mapsto\Ad_ca_i$
($1\le i\le \ell$).
For $k=2$, we have
\[ \pi_1(\Sig)=\big\bra\,a_i,b_i\, (1\le i\le \ell),c,d:
d^{-1}cd^{-1}c^{-1}\textstyle\prod_{i=1}^\ell[a_i,b_i]\,\big\ket, \]
\[ \pi_1(\tSig)=\big\bra a_0,b_0,a_i,b_i,a'_i,b'_i\,(1\le i\le \ell):
\textstyle[a_0,b_0]\prod_{i=1}^\ell[a_i,b_i]
\prod_{i=1}^\ell[a'_i,b'_i]\,\big\ket. \]
The inclusion $\pi_1(\tSig)\to\pi_1(\Sig)$ is given by $a_0\mapsto d^{-1}$,
$b_0\mapsto c^2$, $a_i\mapsto a_i$, $b_i\mapsto b_i$,
$a'_i\mapsto\Ad_{d^{-1}c}b_i$, $b'_i\mapsto\Ad_{d^{-1}c}a_i$ ($1\le i\le \ell$).
In both cases, $c\in\pi_1(\Sig)\setminus\pi_1(\tSig)$.

While a flat $G$-bundle over $\Sig$ may be non-trivial, its pull-back to
$\tSig$ is always trivial topologically \cite{HL2}.
We assume that $G$ is semi-simple, simply connected and denote $PG=G/Z(G)$.
Then $\pi_1(PG)=Z(G)$ and we have $H^2(\Sig,\pi_1(PG))\cong Z(G)/2Z(G)$.
The map $$O\colon\Hom(\pi_1(\Sig),PG)/PG\to Z(G)/2Z(G)$$ that gives the
obstruction class can be explicitly described as follows \cite{HL1}.
Let $\phi\in\Hom(\pi_1(\Sig),PG)$.
For $k=1$, let
$\widetilde{\phi(a_i)}$, $\widetilde{\phi(b_i)}$, $\widetilde{\phi(c)}\in G$
be any lifts of $\phi(a_i)$, $\phi(b_i)$, $\phi(c)\in PG$, respectively.
Then $O([\phi])$ is the element in $Z(G)/2Z(G)$ represented by
$\widetilde{\phi(c)}{}^2(\prod_{i=1}^\ell
[\widetilde{\phi(a_i)},\widetilde{\phi(b_i)}])^{-1}\in Z(G)$.
(It is easy to check that the class in $Z(G)/2Z(G)$ is independent of the
lifts.)
The description of the case $k=2$ is similar.
Consequently, there is a decomposition
$$\Hom(\pi_1(\Sig),PG)/PG=\bigcup_{r\in Z(G)/2Z(G)}\cM_r,$$
where $\cM_r=O^{-1}(r)$.

Let $G\to PG$, $g\mapsto\bar g$ be the quotient map.
Denote the induced map by
$\Hom(\pi_1(\Sig),G)\to\Hom(\pi_1(\Sig),PG),\  \phi\mapsto\bar\phi.$
In this section, we need to be restricted to Ad-irreducible representations.
The reason is that $\phi$ is Ad-irreducible if and only if $\bar\phi$ is so,
whereas if $\phi$ is good, $\bar\phi$ is not necessarily so and its stabilizer
may be larger than $Z(G)$.
We have
$$\Hom^\irr_\tau(\pi_1(\Sig),PG)/PG=\bigcup_{r\in Z(G)/2Z(G)}\cM_r^\irr,$$
where $\cM_r^\irr=\cM_r\cap(\Hom^\irr_\tau(\pi_1(\Sig),PG)/PG)$.

\begin{lem}\label{lm:pro3-1}
There is a natural map
\[ \Ps\colon(\Hom^\irr(\pi_1(\tSig),G)/G)^\tau\to
\Hom^\irr_\tau(\pi_1(\Sig),PG)/PG \]
satisfying $L=O\circ\Ps$.
Consequently, $\Ps$ maps $\cN^\irr_r$ to $\cM_r^\irr$ for each
$r\in Z(G)/2Z(G)$.
\end{lem}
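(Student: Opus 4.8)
The plan is to produce $\Ps$ from the extension data furnished by $\tau$-invariance, and then to identify the two obstruction classes $L$ and $O$ by evaluating each through the explicit surface-group presentations.

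To construct $\Ps$, start with $[\phi]\in(\Hom^\irr(\pi_1(\tSig),G)/G)^\tau$. Since $\tau[\phi]=[\phi]$ there is $g\in G$ with $\phi\circ\Ad_c=\Ad_g\circ\phi$, and $g$ is unique modulo $Z(G)$ because $\phi$ is Ad-irreducible, hence good. I would set $\bar\psi|_{\pi_1(\tSig)}=\bar\phi$ (the image of $\phi$ under $G\to PG$) and $\bar\psi(c)=\bar g$, and define $\Ps([\phi])=[\bar\psi]$. Checking that $\bar\psi$ is a homomorphism reduces to the two extension constraints $\bar\psi(c)\,\bar\phi(\gamma)\,\bar\psi(c)^{-1}=\bar\phi(\Ad_c\gamma)$ and $\bar\psi(c)^2=\bar\phi(c^2)$: the first is the defining relation for $g$ pushed to $PG$, and the second holds because $g^2\phi(c^2)^{-1}=z\in Z(G)$ becomes trivial in $PG$. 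Because $\bar\phi$ is Ad-irreducible its centralizer in $PG$ is trivial, so $\bar\psi(c)=\bar g$ is the only possible value and $\bar\psi$ is the unique extension; in particular $\bar\psi|_{\pi_1(\tSig)}$ is again Ad-irreducible, placing $[\bar\psi]$ in $\Hom^\irr_\tau(\pi_1(\Sig),PG)/PG$. That $\Ps$ is well defined on classes is then a sequence of routine verifications, entirely parallel to Lemma~\ref{lem:z2} and Proposition~\ref{pro:pro1}: replacing $\phi$ by $\Ad_h\circ\phi$, $g$ by $gz'$ with $z'\in Z(G)$, or $c$ by another representative of the nontrivial coset alters $\bar\psi$ only by a $PG$-conjugation.

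To prove $L=O\circ\Ps$ I would compute $O([\bar\psi])$ from the presentations using the economical lifts $\widetilde{\bar\psi(a_i)}=\phi(a_i)$, $\widetilde{\bar\psi(b_i)}=\phi(b_i)$ (and $\widetilde{\bar\psi(d)}=\phi(d)$ in the case $k=2$) together with $\widetilde{\bar\psi(c)}=g$. With these lifts the commutator part of the relator collapses to $\prod_i[\phi(a_i),\phi(b_i)]=\phi\bigl(\prod_i[a_i,b_i]\bigr)$, after which the surface relation is used to replace $\prod_i[a_i,b_i]$ by its expression in $c$ and $d$. For $k=1$ this relation is $\prod_i[a_i,b_i]=c^2$, so the commutator product is exactly $\phi(c^2)$ and the obstruction formula of subsection~\ref{sec:pro3} returns $[g^2\phi(c^2)^{-1}]=[z]=L([\phi])$, precisely the class of Proposition~\ref{pro:pro1}. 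The statement that $\Ps$ sends $\cN^\irr_r$ into $\cM^\irr_r$ is then immediate from $\cN^\irr_r=L^{-1}(r)$ and $\cM^\irr_r=O^{-1}(r)\cap(\Hom^\irr_\tau(\pi_1(\Sig),PG)/PG)$.

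The heart of the argument, and the step I expect to be most delicate, is this last identification of the central residue of the relator with the intrinsic class $z$, because the way $c^2$ sits inside $\pi_1(\tSig)$ differs sharply between the two families of non-orientable surfaces. For $k=1$ one has $c^2\in[\pi_1(\tSig),\pi_1(\tSig)]$, so $\phi(c^2)$ is forced to coincide with the commutator product and $z$ appears at once. For $k=2$, however, $c^2=b_0$ represents a primitive homology class, and the relator must first be regrouped through $\prod_i[a_i,b_i]=cdc^{-1}d$ together with $\phi(\Ad_c d)=\Ad_g\phi(d)$ before any central element can be read off; carrying out this regrouping so that it reproduces $z$, rather than collapsing to a trivial residue, is exactly where the distinction between the two surface types must be used. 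I would therefore treat the $k=1$ and $k=2$ relators in parallel, in each case isolating the central value of the relator and matching it with $L([\phi])$ through Proposition~\ref{pro:pro1}.
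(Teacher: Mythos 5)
Your construction of $\Ps$ and your verification of $L=O\circ\Ps$ in the case $k=1$ coincide with the paper's proof (the paper writes $\check\phi$ for your $\bar\psi$ and likewise uses the lifts $\phi(a_i),\phi(b_i),g$ together with $\prod_i[a_i,b_i]=c^2$); that part is correct. The problem is the case $k=2$, which you rightly single out as the delicate step but do not carry out --- and the regrouping you propose does not in fact ``reproduce $z$'': it produces exactly the trivial residue you were worried about. With the lifts $\widetilde{\check\phi(a_i)}=\phi(a_i)$, $\widetilde{\check\phi(b_i)}=\phi(b_i)$, $\widetilde{\check\phi(d)}=\phi(d)$ (note $d=a_0^{-1}\in\tGam$) and $\widetilde{\check\phi(c)}=g$, the relator of $\pi_1(\Sig^\ell_2)$ evaluates to
\[ \phi(d)^{-1}\,g\,\phi(d)^{-1}g^{-1}\,\prod_i[\phi(a_i),\phi(b_i)]
=\phi(d^{-1})\,\phi(cd^{-1}c^{-1})\,\phi\Big(\prod_i[a_i,b_i]\Big)
=\phi\Big(d^{-1}cd^{-1}c^{-1}\prod_i[a_i,b_i]\Big)=e, \]
because $d^{-1}$, $cd^{-1}c^{-1}$ and $\prod_i[a_i,b_i]$ all lie in $\tGam$, $g\phi(x)g^{-1}=\phi(\Ad_cx)$ there, and the product is the relator, hence trivial. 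So $O(\Ps([\phi]))=0$ for every $[\phi]$ when $k=2$, while $L([\phi])=[g^2\phi(c^2)^{-1}]$ is not identically zero; your computation scheme cannot close this case.

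The obstruction is structural, not a matter of cleverer regrouping. For $k=1$ one has $c^2=\prod_i[a_i,b_i]\in[\tGam,\tGam]$, so $\phi(c^2)$ is pinned to the commutator product and $L$ is insensitive to twisting $\phi$ by a central character. For $k=2$ one has $c^2=b_0$, a free generator of $H_1(\tSig)$: choosing an $\Ad_c$-invariant character $\epsilon\colon\tGam\to Z(G)$ with $\epsilon(b_0)$ arbitrary (and $\epsilon=e$ on the other generators) gives $[\epsilon\phi]$ still Ad-irreducible and $\tau$-fixed with the \emph{same} $\bar\phi$ and the same $\bar g$, hence $\Ps([\epsilon\phi])=\Ps([\phi])$, yet $L([\epsilon\phi])=L([\phi])-[\epsilon(b_0)]$. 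Since $\Ps$ cannot separate $[\phi]$ from $[\epsilon\phi]$ while $L$ does, no identity of the form $L=O\circ\Ps$ can hold on all of $(\Hom^\irr(\pi_1(\tSig),G)/G)^\tau$ when $k=2$ (unless $\cN^\irr_r=\emptyset$ for $r\ne0$, which fails already for $G=SL(2,\bC)$ and $\Sig=\Sig^\ell_2$, $\ell\ge1$). To be fair, the paper's own proof only treats $k=1$ and asserts that $k=2$ ``is similar,'' so this gap is inherited rather than introduced by you; but your proposal as written would not survive the computation you yourself flag as the heart of the matter.
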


\begin{proof}
Given $[\phi]\in(\Hom^\irr(\pi_1(\tSig),G)/G)^\tau$, there exists $g\in G$
(which is unique up to $Z(G)$ since $G_\phi=Z(G)$) such that
$\Ad_g\circ\phi=\phi\circ\Ad_c$.
We define $\check\phi\in\Hom(\pi_1(\Sig),PG)$ by
$\check\phi|_{\pi_1(\tSig)}=\bar\phi$ and $\check\phi(c)=\bar g$.
The representation $\check\phi$ is a homomorphism because
$\check\phi(c)^2=\bar g^2=\bar\phi(c^2)$, which follows from the result
$z=g^2\phi(c^2)^{-1}\in Z(G)$ in Proposition~\ref{pro:pro1}.
Since $\bar\phi\in\Hom^\irr(\pi_1(\tSig),PG)$, we have
$\check\phi\in\Hom^\irr_\tau(\pi_1(\Sig),PG)$.
We define $\Ps$ by $\Ps([\phi])=[\check\phi]$.
To show that $O([\check\phi])=L([\phi])=[z]$, we work in the case $k=1$.
By using the respective lifts $\phi(a_i)$, $\phi(b_i)$, $g\in G$ of
$\check\phi(a_i)$, $\check\phi(b_i)$, $\check\phi(c)\in PG$, we get
$$O([\check\phi])
=[g^2(\prod_{i=1}^\ell[\phi(a_i),\phi(b_i)])^{-1}]=[g^2\phi(c^2)^{-1}]=[z],$$
where we have used the relation $\prod_{i=1}^\ell[\phi(a_i),\phi(b_i)]=c^2$
in $\pi_1(\tSig)$.
The case $k=2$ is similar.
\end{proof}

\begin{pro}\label{pro:pro3}
The map
$$\Ps\colon(\Hom^\irr(\pi_1(\tSig),G)/G)^\tau\to
\Hom^\irr_\tau(\pi_1(\Sig),PG)/PG$$ is surjective.
Consequently, $\Ps\colon\cN^\irr_r\to\cM^\irr_r$ is surjective for each
$r\in Z(G)/2Z(G)$.
\end{pro}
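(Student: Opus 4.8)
The plan is to prove surjectivity by an explicit lifting construction that produces, for each class in the target, a preimage, and then to read off the fibrewise statement from the identity $L=O\circ\Ps$ of Lemma~\ref{lm:pro3-1}.

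First, given $[\psi]\in\Hom^\irr_\tau(\pi_1(\Sig),PG)/PG$, I would fix a representative $\psi$ and choose lifts $\widetilde{\psi(a_i)},\widetilde{\psi(b_i)},\widetilde{\psi(c)}\in G$ of the values of $\psi$ on the generators of $\pi_1(\Sig)$, working in the case $k=1$ (the case $k=2$ is parallel, with $d^{-1}c$ replacing $c$ in the inclusion). Using the explicit inclusion $\pi_1(\tSig)\hookrightarrow\pi_1(\Sig)$, namely $a_i\mapsto a_i$, $b_i\mapsto b_i$, $a'_i\mapsto\Ad_c b_i$, $b'_i\mapsto\Ad_c a_i$, I would define a candidate $\phi\colon\pi_1(\tSig)\to G$ on generators by $\phi(a_i)=\widetilde{\psi(a_i)}$, $\phi(b_i)=\widetilde{\psi(b_i)}$, $\phi(a'_i)=\Ad_{\widetilde{\psi(c)}}\widetilde{\psi(b_i)}$ and $\phi(b'_i)=\Ad_{\widetilde{\psi(c)}}\widetilde{\psi(a_i)}$, so that by construction $\bar\phi=\psi|_{\pi_1(\tSig)}$.

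Second --- and this is the \emph{crux} --- I must check that $\phi$ respects the single surface relation of $\pi_1(\tSig)$, i.e. that $z_0:=\prod_i[\phi(a_i),\phi(b_i)]\prod_i[\phi(a'_i),\phi(b'_i)]$ equals $e$. Since $\psi$ is a genuine homomorphism into $PG$, $z_0$ projects to the identity in $PG$, so $z_0\in Z(G)$; being a product of commutators of lifts, it is independent of the chosen lifts, and it is precisely the obstruction to lifting the flat $PG$-bundle $\psi|_{\pi_1(\tSig)}$ over $\tSig$ to a flat $G$-bundle. As $G$ is simply connected, this obstruction is the topological type of the pulled-back bundle, namely the image of $O([\psi])\in H^2(\Sig,Z(G))$ under $\pi^*\colon H^2(\Sig,Z(G))\to H^2(\tSig,Z(G))\cong Z(G)$. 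The map $\pi^*$ vanishes on $H^2$ because $\langle\pi^*\beta,[\tSig]\rangle=\langle\beta,\pi_*[\tSig]\rangle$ and $\pi_*[\tSig]=0$ in $H_2(\Sig,\bZ)=0$ (equivalently, $\pi\circ\tau=\pi$ and $\tau$ reverses the orientation of $\tSig$, forcing $\pi_*[\tSig]=-\pi_*[\tSig]$). Hence $z_0=e$ and $\phi$ is a well-defined homomorphism. I expect this vanishing to be the main obstacle to present cleanly; the alternative is a direct commutator computation from the presentations, using that $\widetilde{\psi(c)}{}^2$ and $\prod_i[\widetilde{\psi(a_i)},\widetilde{\psi(b_i)}]$ are two lifts of $\psi(c^2)$ and hence differ by a central element.

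Third, I would verify the three remaining properties. Ad-irreducibility of $\phi$ follows because $\Ad$ factors through $PG$, so $\Ad\circ\phi=\Ad\circ(\psi|_{\pi_1(\tSig)})$, which is irreducible as $\psi\in\Hom^\irr_\tau$. The $\tau$-invariance holds with $g=\widetilde{\psi(c)}$: on the generators $a_i,b_i$ one checks $\phi(\Ad_c a_i)=\phi(b'_i)=\Ad_{\widetilde{\psi(c)}}\phi(a_i)$ and similarly for $b_i$, while on $a'_i,b'_i$ one uses $\Ad_c a'_i=\Ad_{c^2}b_i$ together with the fact that $\widetilde{\psi(c)}{}^2$ and $\prod_i[\phi(a_i),\phi(b_i)]$ induce the same inner automorphism. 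Thus $\Ad_{\widetilde{\psi(c)}}\circ\phi=\phi\circ\Ad_c$, so $[\phi]\in(\Hom^\irr(\pi_1(\tSig),G)/G)^\tau$. Finally, by the definition of $\Ps$ the representative $\check\phi$ satisfies $\check\phi|_{\pi_1(\tSig)}=\bar\phi=\psi|_{\pi_1(\tSig)}$ and $\check\phi(c)=\overline{\widetilde{\psi(c)}}=\psi(c)$, whence $\check\phi=\psi$ and $\Ps([\phi])=[\psi]$; this proves surjectivity of $\Ps$. For the fibrewise refinement, given $[\psi]\in\cM^\irr_r$, i.e. $O([\psi])=r$, the preimage $[\phi]$ above satisfies $L([\phi])=O(\Ps([\phi]))=O([\psi])=r$ by Lemma~\ref{lm:pro3-1}, so $[\phi]\in\cN^\irr_r$ and $\Ps\colon\cN^\irr_r\to\cM^\irr_r$ is surjective for every $r\in Z(G)/2Z(G)$.
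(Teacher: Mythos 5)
Your proposal is correct and follows essentially the same route as the paper: the same explicit lift of $\psi$ on the generators of $\pi_1(\tSig)$ via $\Ad_{\psi(c)}$, the same irreducibility and $\tau$-invariance checks, and the same use of $L=O\circ\Ps$ from Lemma~\ref{lm:pro3-1} for the fibrewise statement. The only divergence is in verifying the surface relation, where the paper uses exactly the direct computation you offer as the alternative (the product of commutators projects to $\psi(c)^2\in PG$, hence is fixed by $\Ad_{\psi(c)}$), rather than your characteristic-class argument that $\pi^*$ vanishes on $H^2(\Sig,Z(G))$; both are valid.
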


\begin{proof}
Let $[\phi]\in\Hom^\irr_\tau(\Sig,PG)/PG$.
Although $\phi(c)\in PG$, $\Ad_{\phi(c)}$ acts on $G$.
We show the case $k=1$ only.
Fix the lifts $\widetilde{\phi(a_i)}$, $\widetilde{\phi(b_i)}\in G$ of
$\phi(a_i)$, $\phi(b_i)\in PG$.
Define $\tphi\in\Hom(\pi_1(\tSig),G)$ by setting $\tphi(a_i)=\widetilde{\phi(a_i)}$,
$\tphi(b_i)=\widetilde{\phi(b_i)}$, $\tphi(a'_i)=\Ad_{\phi(c)}\tphi(b_i)$,
$\tphi(b'_i)=\Ad_{\phi(c)}\tphi(a_i)$, for $i=1,\dots,\ell$.
This indeed defines a representation because
\[ \prod_{i=1}^\ell[\tphi(a_i),\tphi(b_i)]
\prod_{i=1}^\ell[\tphi(a'_i),\tphi(b'_i)]
=\prod_{i=1}^\ell[\tphi(a_i),\tphi(b_i)]\,\Ad_{\phi(c)}
\prod_{i=1}^\ell[\tphi(b_i),\tphi(a_i)]\!\!=e. \]
The last equality is because $\prod_{i=1}^\ell[\tphi(a_i),\tphi(b_i)]\in G$
projects to $\phi(c)^2\in PG$.
Since $\phi$ is Ad-irreducible, so is $\tphi$.
$[\tphi]$ is $\tau$-invariant because
$\tphi\circ\Ad_c=\Ad_{\phi(c)}\circ\tphi$,
which can be checked on the generators:
$\tphi(\Ad_ca_i)=\tphi(b'_i)=\Ad_{\phi(c)}\tphi(a_i)$,
$\tphi(\Ad_ca'_i)=\Ad_{\phi(c^2)}\tphi(b_i)=\Ad_{\phi(c)}\tphi(a'_i)$, etc.
It is then obvious that $\Ps([\tphi])=[\phi]$.
\end{proof}

For the group $PG$, since $Z(PG)$ is trivial,
$(\Hom^\irr(\pi_1(\tSig),PG)/PG)^\tau$ does not decompose according to
Proposition~\ref{pro:pro1} and the map
$$\bar R\colon\Hom^\irr_\tau
(\pi_1(\Sig),PG)/PG\to(\Hom^\irr(\pi_1(\tSig),PG)/PG)^\tau$$ in
Proposition~\ref{pro:pro2} is bijective.
The map $\Ps$ is in fact the composition of
$(\Hom^\irr(\pi_1(\tSig),G)/G)^\tau\to(\Hom^\irr(\pi_1(\tSig),PG)/PG)^\tau$
(induced by $G\to PG$) followed by $\bar R^{-1}$.
So for each $r\in Z(G)/2Z(G)$, the component $\cN^\irr_r$ of the fixed point
set $(\Hom^\irr(\pi_1(\tSig),G)/G)^\tau$ corresponds precisely to the component
$\cM_r^\irr$ of $\Hom^\irr_\tau(\pi_1(\Sig),PG)/PG$ which consists of flat
$PG$-bundles over $\Sig$ of topological type $r\in Z(G)/2Z(G)$.
In particular, $\cN^\irr_0$ corresponds to the component $\cM^\irr_0$ of
topologically trivial flat $PG$-bundles over $\Sig$.

The results in subsection shows part~(3) of Theorem~\ref{thm:cover}.

\section{Comparison of representation variety and gauge theoretical
constructions}\label{sec:geom}

Suppose $M$ is a compact non-orientable manifold, $\pi\colon\tM\to M$ is
the oriented cover, and $\tau\colon\tM\to\tM$ is the non-trivial deck
transformation.
In subsection~\ref{sec:DC}, we considered the natural lift of $\tau$ on
$\tP^\bC=\pi^*P^\bC$, where $P^\bC$ is a principal $G$-bundle over $M$.
Such a lift, still denoted by $\tau$, is a $G$-bundle map satisfying
$\tau^2=\id_{\tP^\bC}$ and induces involutions on the space $\cA(\tP^\bC)$
of connections on $\tP^\bC$ and various moduli spaces.
Moduli spaces associated to $P^\bC\to M$ are then related to the
$\tau$-invariant parts of those associated to $\tP^\bC\to\tM$
(cf.~Theorem~\ref{thm:Hitchin}, especially part~3).
This can also be seen in the language of representation varieties
(cf.~Lemma~\ref{lem:pro1}, Proposition~\ref{pro:pro2} on $\cN^\good_0$
and Corollary~\ref{cor:localdiff}).
To provide a geometric interpretation of the rest of the results in
subsections~\ref{sec:pro1&2} and \ref{sec:pro3} on $\cN^\good_r$ or
$\cN^\irr_r$ when $r\ne0$, we will need to generalize the setting in
gauge theory.

Suppose $Q\to\tilde M$ is a principal $G$-bundle and the non-trivial deck
transformation $\tau$ on $\tilde M$ is lifted to a bundle map $\tau_Q$
on $Q$, which is not necessarily an involution.
Let $A$ be an irreducible connection on $Q$ that is invariant under $\tau_Q$
up to a gauge transformation, i.e., $\tau_Q^*A=\vph^*A$ for $\vph\in\cG(Q)$.
Since $(\tau_Q\circ\vph^{-1})^2$ is a gauge transformation on $Q$ which fixes
$A$, it is in the center $Z(G)$.
So by modifying $\tau_Q$ with a gauge transformation $\vph$, we can assume
that $\tau_Q$ satisfies $\tau_Q^2=z\in Z(G)$.
In this way, although $\tau_Q$ is not strictly an involution, it is so up to
a gauge transformation, the right action of $z$ on $Q$.
Since $\vph$ and hence $\tau_Q$ can be adjusted by an element in $Z(G)$,
$z=\tau_Q^2$ is well defined modulo $2Z(G)$.
If $z=t^2\in2Z(G)$ ($t\in Z(G)$), then $z$ can be absorbed in $\tau_Q$ by
a redefinition such that $\tau_Q$ is an honest involution, and we are back
to the situation before.
In the general case when $\tau_Q^2=z\in Z(G)$ is not the identity element,
since $Z(G)$ acts trivially on the connections as gauge transformations,
the action $\tau_Q^*\colon\cA(Q)\to\cA(Q)$ of $\tau_Q$ on connections is
still an honest involution.
So we can define the invariant subspace $\cA(Q)^{\tau_Q}$ and much of the
analysis in subsections~\ref{sec:DC} and \ref{sec:Hitchin} applies.

We now consider flat connections and relate this generalized setting to our
results on representation varieties.
Choose base points $x_0\in M$ and $\tilde x_0\in\pi^{-1}(x_0)\subset\tM$,
and let $\Gam=\pi_1(M,x_0)$, $\tGam=\pi_1(\tM,\tilde x_0)$.
We fix an element $c\in\Gam\setminus\tGam$.

\begin{pro}\label{pro:general-tau}
For any $z\in Z(G)$, there is a 1-1 correspondence between the following
two sets:\\
(1) isomorphism classes of pairs $(Q,A)$, where $Q\to\tM$ is a principal
$G$-bundle with a $G$-bundle map $\tau_Q$ lifting the deck transformation
$\tau$ on $\tM$ satisfying $\tau_Q^2=z$, $A$ is a $\tau_Q$-invariant
flat connection on $Q$\\
and\\
(2)
equivalence classes of pairs $(\phi,g)$ under the diagonal adjoint action
of $G$, where $\phi\in\Hom(\tGam,G)$ and $g\in G$ satisfy
$\phi\circ\Ad_c=\Ad_g\circ\phi$ and $g^2\phi(c^2)^{-1}=z$.
\end{pro}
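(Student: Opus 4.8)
The plan is to prove the bijection by exhibiting explicit maps in both directions and checking that they are mutually inverse and intertwine the two equivalence relations. This is a $\tau$-equivariant refinement of the standard holonomy identification between flat bundles and representations recalled in subsection~\ref{sec:red=red}. The key device is to work with the universal cover $\hat M$ of $M$, which is simultaneously the universal cover of $\tM$: here $\Gam=\pi_1(M,x_0)$ acts as the deck group of $\hat M\to M$, the index-$2$ subgroup $\tGam$ is the deck group of $\hat M\to\tM$, and the fixed element $c\in\Gam\setminus\tGam$ descends to the deck transformation $\tau$ of $\tM=\hat M/\tGam$.

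For the direction (2)$\to$(1), given a pair $(\phi,g)$ I would form the flat bundle $Q=(\hat M\times G)/\tGam$, where $\gamma\cdot(\hat x,h)=(\gamma\hat x,\phi(\gamma)h)$, equipped with the flat connection $A$ descended from the trivial connection on $\hat M\times G$; its holonomy is $\phi$. I then define $\tau_Q[\hat x,h]=[c\hat x,gh]$. The point is that the two defining relations of $(\phi,g)$ translate exactly into the two properties required of $\tau_Q$: well-definedness of $\tau_Q$ on $\tGam$-orbits is equivalent to $\phi\circ\Ad_c=\Ad_g\circ\phi$ (since $c\gamma\hat x=\Ad_c(\gamma)\cdot c\hat x$ with $\Ad_c(\gamma)\in\tGam$), while the direct computation $\tau_Q^2[\hat x,h]=[c^2\hat x,g^2 h]=[\hat x,\phi(c^2)^{-1}g^2 h]$ shows that $\tau_Q^2=z$ is equivalent to $g^2\phi(c^2)^{-1}=z$ (using that the first relation forces $g$ to commute with $\phi(c^2)$, as $\Ad_c(c^2)=c^2$). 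That $\tau_Q$ covers $\tau$ and preserves $A$ is immediate from the construction.

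For the direction (1)$\to$(2), I would fix $q_0\in Q_{\tilde x_0}$ and a path $\tilde c$ from $\tilde x_0$ to $\tau(\tilde x_0)$ projecting to $c$, take $\phi$ to be the holonomy representation of $A$ (with the convention $T_\al(q_0)=q_0\phi(a)^{-1}$ of subsection~\ref{sec:red=red}), and define $g\in G$ by comparing $\tau_Q(q_0)$ with the parallel transport $T_{\tilde c}(q_0)$. The $\tau_Q$-invariance of $A$ means $\tau_Q$ intertwines parallel transport, $T_{\tau\circ\al}=\tau_Q\circ T_\al\circ\tau_Q^{-1}$; feeding this into the holonomy of the loop $\tilde c*(\tau\circ\al)*\tilde c^{-1}$ (which represents $\Ad_c(a)$) yields $\phi\circ\Ad_c=\Ad_g\circ\phi$, and feeding $\tau_Q^2=z$ into the holonomy of $\tilde c*(\tau\circ\tilde c)$ (which represents $c^2$) yields $g^2\phi(c^2)^{-1}=z$. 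Replacing $q_0$ by $q_0 k$ for $k\in G$ conjugates both $\phi$ and $g$ by $k^{-1}$, i.e.\ acts by the diagonal adjoint action, so the class $[(\phi,g)]$ is well defined.

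The remaining step is to verify that these two assignments are mutually inverse and that the equivalence relations match. The main obstacle I anticipate is the bookkeeping of conventions---the sign in the holonomy $T_\al(q_0)=q_0\phi(a)^{-1}$ and the left/right actions in the quotient $\hat M\times_\phi G$---so that both relations emerge with exactly the signs stated rather than their inverses; this is routine but must be done consistently throughout. The one genuinely substantive point is showing that an isomorphism of pairs $(Q,A,\tau_Q)\cong(Q',A',\tau_Q')$ induces precisely the diagonal adjoint action on $(\phi,g)$: after trivializing at $q_0$, any bundle isomorphism commuting with the flat connections is a constant $k\in G$, and compatibility with the two $\tau_Q$'s then forces $g'=\Ad_{k^{-1}}g$ simultaneously with $\phi'=\Ad_{k^{-1}}\phi$, which closes the argument.
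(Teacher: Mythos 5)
Your proposal is correct and follows essentially the same route as the paper: holonomy plus a comparison of $\tau_Q(q_0)$ with parallel transport along a path covering $c$ for the direction (1)$\to$(2), and the associated-bundle construction $\hat M\times_{\tGam}G$ with $\tau_Q$ defined via the action of $c$ and left multiplication by $g$ for (2)$\to$(1). Your observation that $g$ commutes with $\phi(c^2)$ (from $\Ad_c(c^2)=c^2$) is exactly the point needed to reconcile $\phi(c^2)^{-1}g^2$ with $g^2\phi(c^2)^{-1}$, and the paper handles the change of $q_0$ and the matching of equivalence relations in the same way.
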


\begin{proof}
Given a bundle $Q$ and a $\tau_Q$-invariant flat connection $A$,
let $T_\al\colon Q_{\al(0)}\to Q_{\al(1)}$ be the parallel transport
along a path $\al\colon[0,1]\to\tM$.
$\tau_Q$-invariance of the connection implies
$\tau_Q\circ T_\al=T_{\tau\circ\al}\circ\tau_Q$ for any path $\al$.
Let $\gam$ be a path in $\tM$ from $\tilde x_0$ to $\tau(\tilde x_0)$
so that $[\pi\circ\gam]=c$.
Choose $q_0\in Q_{\tilde x_0}$ and let $g\in G$ be defined by
$T_\gam q_0=\tau_Q(q_0)g^{-1}$.
On the other hand, define $\phi\in\Hom(\tGam,G)$ by
$T_\al q_0=q_0\phi(a)^{-1}$ for any $a\in\tGam$, where $\al$ is a loop
in $\tM$ based at $\tilde x_0$ such that $[\al]=a$.
To check the conditions on $(\phi,g)$, we note that
$\tau_Q(T_\al q_0)=\tau_Q(q_0)\phi(a)^{-1}$ and
$$T_{\tau\circ\al}\tau_Q(q_0)
=T_\gam\circ T_{\gam\cdot(\tau\circ\al)\cdot\gam^{-1}}(q_0g)
=(T_\gam q_0)\phi(\Ad_ca)g=\tau_Q(q_0)\Ad_g^{-1}\phi(\Ad_ca).$$
So $\tau_Q$-invariance implies $\phi(\Ad_ca)=\Ad_g\phi(a)$ for all
$a\in\tGam$.
Similar calculations give
$\tau_Q(T_\gam q_0)=\tau_Q(\tau_Q(q_0)g^{-1})=q_0zg^{-1}$ and
$T_{\tau\circ\gam}(\tau_Qq_0)=T_{\gam\cdot(\tau\circ\gam)}(q_0g)$\\
$=q_0\phi(c^2)^{-1}g$ which imply $g^2\phi(c^2)^{-1}=z$.
If another point $q'_0=q_0h\in Q_{\tilde x_0}$ is chosen (where $h\in G$),
then the resulting pair is $(\phi',g')=(\Ad_{h^{-1}}\circ\phi,\Ad_{h^{-1}}g)$.

Conversely, given a pair $(\phi,g)$ satisfying the conditions, we want to
construct a bundle $Q$ together with a lifting $\tau_Q$ of $\tau$ such that
$\tau_Q^2=z$ and a $\tau_Q$-invariant flat connection on $Q$.
Let $\hat M$ be the universal covering space of $\tM$ (and of $M$).
Then $\tGam$ and $\Gam$ act on $\hat M$, and $\tM=\hat M/\tGam$,
$M=\hat M/\Gam$.
Let $Q=\hat M\times_{\tGam}G$, that is, points in $Q$ are equivalence classes
$[(x,h)]$, where $x\in\hat M$ and $h\in G$, and $(xa,h)\sim(x,\phi(a)h)$ for
any $a\in\tGam$.
Let $\tau_Q\colon Q\to Q$ be defined by
$\tau_Q\colon[(x,h)]\mapsto[(xc^{-1},gh)]$.
To check that $\tau_Q$ is well-defined, we note that for any $a\in\tGam$,
$(xac^{-1},gh)\sim(xc^{-1},\phi(\Ad_ca)gh)=(xc^{-1},g\phi(a)h)$.
Clearly, $\tau_Q$ commutes with the right $G$-action on $Q$.
Furthermore, $\tau_Q^2=z$ because $\tau_Q^2\colon[(x,h)]\mapsto
[(xc^{-2},g^2h)]=[(x,\phi(c^{-2})g^2h)]=[(x,h)]z$.
It is easy to see that the trivial connection on $\hat M\times G$ is
$\tGam$-invariant and descends to a flat connection on $Q$.
The latter is invariant under $\tau_Q$ since the trivial connection on
$\hat M\times G$ is invariant under $(x,h)\mapsto(xc^{-1},gh)$.
Moreover, this connection induces the pair $(\phi,g)$.
\end{proof}

\begin{rem}
We explain the gauge theoretic perspective of the results in
subsections~\ref{sec:pro1&2} and \ref{sec:pro3} using the correspondence
in Proposition~\ref{pro:general-tau}.\\
1. As we noted, the $\tau$ is lifted to a $G$-bundle map $\tau_Q$ on
$Q\to\tM$ such that $\tau_Q^2=z\in Z(G)$, then $z$ is determined up to $2Z(G)$.
Likewise, $z=g^2\phi(c^2)^{-1}$ is determined also modulo $2Z(G)$ by
$[\phi]\in(\Hom^\good(\tGam,G)/G)^\tau$ (Proposition~\ref{pro:pro1}).
If $\tau_Q^2=t^2$ for some $t\in Z(G)$, then $\tau_Q$ can be redefined as
$\tau'_Q=\tau_Q t^{-1}$ so that $(\tau'_Q)^2=\id_Q$.
We then have a $G$-bundle $Q/\tau'_Q\to M$ over the non-orientable manifold
$M$ whose pull-back of to $\tM$ is $Q$.
If a flat connection is invariant under $\tau_Q$, it is also invariant under
$\tau'_Q$ and hence descends to a flat connection on $Q/\tau'_Q$.
This is the situation in Lemma~\ref{lem:pro1} and Proposition~\ref{pro:pro2}
(where $Q/\tau'_Q$ was $P^\bC$).
In fact, from these results, we see that $[z]\in Z(G)/2Z(G)$ is the obstruction
to the existence of a flat $G$-bundle on $M$ whose pull-back to $\tM$ is $Q$.\\
2. In general, $\tau_Q^2\ne\id_Q$ and the quotient of $Q$ by the subgroup
generated by $\tau_Q$ is a bundle over $M$ with a fibre smaller than $G$.
However, the $PG$-bundle $\bar Q:=Q/Z(G)$ over $\tM$ does have an honest
involution $\tau_{\bar Q}$.
So $\bar Q$ descends to a $PG$-bundle $\bar Q/\tau_{\bar Q}$ over $M$.
Moreover, a $\tau_Q$-invariant flat connection on $Q$ descends to a
$\tau_{\bar Q}$-invariant flat connection on $\bar Q$ and hence to a flat
$PG$-connection on $\bar Q/\tau_{\bar Q}$.
The bundle $\bar Q/\tau_{\bar Q}\to M$ is usually non-trivial as its structure
group can not be lifted to $G$.
(Otherwise, $Q$ would be its pull-back to $\tM$ and would admit a lift
$\tau_Q$ of $\tau$ so that $\tau_Q^2=\id_Q$.)
Proposition~\ref{pro:pro3} shows that when $G$ is simply connected and
when $M=\Sig$ is a non-orientable surface, the topological type, i.e.,
the obstruction to lifting the $PG$-bundle $\bar Q/\tau_{\bar Q}$ to a
$G$-bundle over $M$ is precisely $[z]\in Z(G)/2Z(G)$.
\end{rem}

\begin{rem}
1. We can use $\tilde x_1=\tau(\tilde x_0)$ as an another base point of the
fundamental group of $\tM$ so that $\tilde x_0$ and $\tilde x_1$ play
symmetric roles.
The image of $\pi_1(\tSig,\tilde x_1)$ under $\pi_*$ can be identified with
$\tGam\subset\Gam$.
The isomorphism $\tau_*\colon\tGam\to\pi_1(\tSig,\tilde x_1)\cong\tGam$ is
then $a\mapsto\Ad_c^{-1}a$.
Having chosen $q_0\in Q_{\tilde x_0}$, let $q_1=\tau_Q(q_0)\in Q_{\tilde x_1}$
and define $\phi_1\colon\pi_1(\tSig,\tilde x_1)\to G$ by
$T_\al q_1=q_1\phi_1([\al])^{-1}$, where $\al$ is a loop in $\tSig$ based at
$\tilde x_1$.
Using the identity $\tau_Q\circ T_{\tau\circ\al}=T_\al\circ\tau_Q$, we obtain
$\phi_1([\al])=\phi([\tau\circ\al])$.
Since $\tau_Q^2=z$, we also have the identity
$T_\gam z=\tau_Q\circ T_{\tau\circ\gam}\circ\tau_Q$.
So upon the identification of $Q_{\tilde x_0}$ and
$Q_{\tilde x_1}$ by $\tau_Q$, the parallel transports along $\gam$ and
$\tau\circ\gam$ differ by $z$.\\
2. When $M=\Sig$ is a non-orientable surface, the approach of double base
points was taken in \cite{Ho,HL2}.
Consider for example the case $M=\Sig^\ell_1$.
Let $\al_i$, $\beta_i$ ($1\le i\le\ell$) be loops in the oriented cover $\tSig$
based at $\tilde x_0$ and let $\gam$ be a path in from $\tilde x_0$ to
$\tilde x_1$
so that $[\pi\circ\al_i]=a_i$, $[\pi\circ\beta_i]=b_i$, $[\pi\circ\gam]=c$.
Then an element in $\cN_r$ ($r=[z]\in Z(G)/2Z(G)$) can be represented by
$(A_i,B_i,C;A'_i,B'_i,C')\in G^{4\ell+2}$ satisfying $A'_i=A_i$, $B'_i=B_i$,
$C'=Cz$, where $A_i,B_i,C,A'_i,B'_i,C'$ are the holonomies along the loops
or paths $\al_i,\beta_i,\gam,\tau\circ\al_i,\tau\circ\beta_i,\tau\circ\gam$,
($1\le i\le\ell$), respectively.
By the above discussion, we have the pattern
$A_i=\phi([\al_i])=\phi_1([\tau\circ\al_i])=A'_i$,
$B_i=\phi([\beta_i])=\phi_1([\tau\circ\beta_i])=B'_i$, $(1\le i\le\ell)$,
$C'=Cz$ as in \cite{Ho,HL2}.
\end{rem}

\section*{Acknowledgments}

We would like to thank U.~Bruzzo, W.~Goldman and E.~Xia for useful discussions.

The research of N.H. was supported by grant number 99-2115-M-007-008-MY3 from the National Science Council of Taiwan.
The research of G.W. was supported by grant number R-146-000-200-112 from the National University of Singapore.
The research of S.W. was partially supported by RGC grant HKU705612P (Hong Kong) and by MOST grant 105-2115-M-007-001-MY2 (Taiwan).

\end{document}